
\documentclass[12pt,verbatim]{amsart}
\usepackage{amsfonts}
\usepackage{ifthen}
\usepackage{amsthm}
\usepackage{amsmath}
\usepackage{amssymb}
\usepackage{graphicx}
\usepackage{amscd,amssymb,amsthm}

\usepackage{caption}

\newcounter{minutes}\setcounter{minutes}{\time}
\divide\time by 60
\newcounter{hours}\setcounter{hours}{\time}
\multiply\time by 60 \addtocounter{minutes}{-\time}

\setlength{\paperwidth}{210mm} \setlength{\paperheight}{297mm}
\setlength{\oddsidemargin}{0mm} \setlength{\evensidemargin}{0mm}
\setlength{\topmargin}{-20mm} \setlength{\headheight}{10mm}
\setlength{\headsep}{13mm} \setlength{\textwidth}{160mm}
\setlength{\textheight}{240mm} \setlength{\footskip}{15mm}
\setlength{\marginparwidth}{0mm} \setlength{\marginparsep}{0mm}

\title{The adjacent sides of hyperbolic Lambert quadrilaterals}

\author{Gendi Wang}

\address{School of Science, Zhejiang Sci-Tech University, Hangzhou 310018, China}
\email{gendi.wang@zstu.edu.cn}

\newcommand{\comment}[1]{}

\newtheorem{theorem}[equation]{Theorem}

\newtheorem{lemma}[equation]{Lemma}
\newtheorem{proposition}[equation]{Proposition}

\newtheorem{corollary}[equation]{Corollary}
\newtheorem{remark}[equation]{Remark}

\newcommand{\beq}{\begin{equation}}
\newcommand{\eeq}{\end{equation}}
\newcommand{\B}{\mathbb{B}^2}

\newcommand{\C}{\mathbb{C}}
\newcommand{\UH}{\mathbb{H}^2}

\newcommand{\arth}{{\rm arth}}
\newcommand{\arsh}{{\rm arsh}}


\numberwithin{equation}{section}

\begin{document}

\def\thefootnote{}
\footnotetext{ \texttt{\tiny File:~\jobname .tex,
          printed: \number\year-\number\month-\number\day,
          \thehours.\ifnum\theminutes<10{0}\fi\theminutes}
} \makeatletter\def\thefootnote{\@arabic\c@footnote}\makeatother

\maketitle

\begin{abstract}
We prove sharp bounds for the product and the sum of the hyperbolic lengths of a pair of hyperbolic adjacent sides of hyperbolic Lambert quadrilaterals in the unit disk. We also show the H\"older convexity of the inverse hyperbolic sine function involved in the hyperbolic geometry.
\end{abstract}

{\small \sc Keywords.} { Hyperbolic Lambert quadrilateral, hyperbolic metric, H\"older mean}

{\small \sc Mathematics Subject Classification~(2010).} {51M09~(26D07)}

\section{Introduction}

Given a pair of points in the closure of the unit disk ${\mathbb B}^2$ in the complex plane $\C$, there exists a unique hyperbolic geodesic line joining these two points.
Hyperbolic lines are simply sets of the form $C \cap {\mathbb B^2}$ where $C$ is a circle perpendicular to the unit circle, or a  Euclidean diameter of
$\B\,.$ For a quadruple of four points
$\{a,b,c,d\}$ in the closure of the unit disk we can draw these hyperbolic lines
through each of the four pairs of points $\{a,b\},$ $\{b,c\},$ $\{c,d\},$ and $\{d,a\}\,.$ If these hyperbolic lines bound a domain $D \subset {\mathbb B}^2$  such that the points $\{a,b,c,d\}$ are in the positive order on the boundary of the domain $D\,,$ then we say
that the quadruple of points $\{a,b,c,d\}$ determines a hyperbolic quadrilateral $Q(a,b,c,d)\,$ and that the points $a,b,c,d$ are its vertices.
A hyperbolic quadrilateral with angles equal to $\pi/2, \pi/2, \pi/2, \phi\,(0\leq \phi < \pi/2)\,,$ is
called a hyperbolic {\em Lambert} quadrilateral \cite[p.~156]{be}, see Figure 1. Observe that one of the vertices of a Lambert quadrilateral may be on the unit circle, in which case the angle at that vertex is $\phi=0\,.$

In \cite[Theorems 1.1 and 1.2]{vw}, the authors gave the sharp bounds of the product and the sum of two hyperbolic distances between the opposite sides of hyperbolic Lambert quadrilaterals in the unit disk. By \cite[Proposition 3.3]{vw}, we know that the above two hyperbolic distances are also a pair of the lengths of the adjacent sides of hyperbolic Lambert quadrilaterals with respect to the vertex $v_a$ (see $d_1$, $d_2$ in Figure 1). Therefore, it is natural to raise the problem: how does the pair of the lengths of the adjacent sides with respect to the vertex $v_c$ behave (see $d_3$, $d_4$ in Figure 1)? This is the motivation of this paper, and we will find the sharp bounds of the product and the sum of this pair of the lengths of the adjacent sides of hyperbolic Lambert quadrilaterals in the unit disk. The main results of this paper are formulated as follows.

\begin{theorem}\label{Lamdd}
Let $Q (v_a\,,v_b\,,v_c\,,v_d)$ be a hyperbolic Lambert quadrilateral in $\B$ and let the quadruple of interior angles
$(\frac{\pi}{2}\,,\frac{\pi}{2}\,,\phi\,,\frac{\pi}{2})$, $\phi\in[0, \pi/2)\,,$ corresponds to the quadruple $(v_a\,,v_b\,,v_c\,,v_d)$ of vertices. Let
$d_3=\rho(v_c\,,v_b)\,,$  $d_4=\rho(v_c\,,v_d)$ (see Figure \ref{Lamb}), and
let $s={\rm th} \rho(v_a,v_c)\in(0,1)$, where $\rho$ is the hyperbolic metric in the unit disk.
Then
\begin{eqnarray*}
d_3d_4\leq \left(\log \sqrt{\frac{1+s\sqrt{2-s^2}}{1-s^2}}\right)^2
\end{eqnarray*}
Here equality holds if and only if $v_c$ is on the bisector of the interior angle at $v_a$.
\end{theorem}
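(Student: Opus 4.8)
The plan is to use the diagonal $v_a v_c$ to split $Q$ into two hyperbolic right triangles and to reduce the claim to a one–variable optimisation. Write $L=\rho(v_a,v_c)$, so that $s=\tanh L$ and $\sinh L = s/\sqrt{1-s^2}=:M$. The diagonal divides the right angle at $v_a$ into two angles $\alpha$ and $\pi/2-\alpha$, and it cuts $Q$ into the right triangle $v_av_bv_c$ (right angle at $v_b$) and the right triangle $v_av_dv_c$ (right angle at $v_d$), each with hypotenuse $L$. In each triangle the side $d_3$, resp. $d_4$, is the leg opposite $v_a$, so the hyperbolic relation $\sinh(\text{opposite leg})=\sinh(\text{hypotenuse})\,\sin(\text{angle at }v_a)$ gives $\sinh d_3=M\sin\alpha$ and $\sinh d_4=M\cos\alpha$. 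The angle defect of each right triangle forces $\phi<\pi/2$ for every $\alpha\in(0,\pi/2)$, so each such $\alpha$ corresponds to an admissible Lambert quadrilateral, and $v_c$ lies on the bisector of the angle at $v_a$ exactly when $\alpha=\pi/4$.

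Thus $d_3=\arsh(M\sin\alpha)$ and $d_4=\arsh(M\cos\alpha)$, and with the substitution $t=\sin^2\alpha\in(0,1)$ the problem becomes: maximise $f(t)f(1-t)$ over $t\in(0,1)$, where $f(t)=\arsh(M\sqrt t)$. Since $\log$ is increasing, this is equivalent to maximising $p(t)+p(1-t)$ with $p=\log f$, a function symmetric about $t=1/2$. I would extract the maximum from concavity: if $p$ is strictly concave on $(0,1)$, then Jensen's inequality gives $p(t)+p(1-t)\le 2p(1/2)$, with equality iff $t=1/2$.

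The heart of the argument is therefore the concavity of $p=\log f$, and I expect this to be the main obstacle. A direct computation gives $f'(t)=\dfrac{M}{2\sqrt t\,\sqrt{1+M^2t}}>0$ and $f''(t)=-f'(t)\Bigl(\dfrac{1}{2t}+\dfrac{M^2}{2(1+M^2t)}\Bigr)$, so setting $A(t):=\dfrac{1}{2t}+\dfrac{M^2}{2(1+M^2t)}>0$ one finds $f''f-(f')^2=-f'\,(A f+f')<0$, because $f,f',A$ are all positive on $(0,1)$. Hence $(\log f)''=(f''f-(f')^2)/f^2<0$, which is exactly the required strict concavity. Once the logarithmic derivative is written out this step is routine, but it is where the real content sits.

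Combining the two steps, $d_3d_4=f(t)f(1-t)\le f(1/2)^2=\bigl(\arsh(M/\sqrt2)\bigr)^2$, with equality precisely when $t=1/2$, i.e. $\alpha=\pi/4$, i.e. $v_c$ lies on the bisector of the interior angle at $v_a$. It then remains only to identify the constant: substituting $M=s/\sqrt{1-s^2}$ and using $\arsh x=\log(x+\sqrt{x^2+1})$ yields $\arsh(M/\sqrt2)=\log\sqrt{(1+s\sqrt{2-s^2})/(1-s^2)}$, which is exactly the claimed bound.
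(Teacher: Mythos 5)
Your proof is correct, and it takes a genuinely different route from the paper's in both of its main steps. For the parametrization $\sinh d_3=M\sin\alpha$, $\sinh d_4=M\cos\alpha$, you split the quadrilateral along the diagonal $v_av_c$ and invoke the hyperbolic right-triangle relation $\sinh(\text{leg})=\sinh(\text{hypotenuse})\sin(\text{opposite angle})$; the paper instead establishes the same formulas in Proposition \ref{d3d4for} by a coordinate computation in the disk model: normalizing $v_a=0$, $v_c=te^{i\theta}$, applying the distance formula \eqref{sh} together with the orthogonal-circle data \eqref{orar}, and simplifying down to the identity $f_s(r)\sqrt{1+f_s^2(r)}=mr'/2$. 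Your synthetic derivation is shorter and avoids that algebra, at the cost of quoting the standard trigonometric identity for hyperbolic right triangles (it is in Beardon's book, the paper's reference \cite{be}). For the optimization step, you substitute $t=\sin^2\alpha$ and prove that $p(t)=\log\arsh(M\sqrt t)$ is strictly concave (via $f''=-Af'$, so $f''f-(f')^2=-f'(Af+f')<0$ -- this computation is correct), then apply Jensen to $p(t)+p(1-t)$; the paper instead proves in Lemma \ref{lecr}(2) that $r\mapsto\arsh(mr)\arsh(mr')$ increases on $(0,\tfrac{\sqrt2}{2}]$ and decreases on $[\tfrac{\sqrt2}{2},1)$, by writing the derivative as a positive factor times $\phi_2(r')-\phi_2(r)$ with $\phi_2(r)=r\sqrt{1+m^2r^2}\,\arsh(mr)$ increasing. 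Both arguments yield the same sharp bound and the same equality characterization ($\alpha=\pi/4$, i.e.\ $v_c$ on the bisector); your log-concavity/Jensen packaging is conceptually clean and handles the equality case in one stroke, while the paper's unimodality lemma gives slightly more information (monotonicity on each side of $\tfrac{\sqrt2}{2}$) and has a companion part, Lemma \ref{lecr}(3), that is reused for the sum $d_3+d_4$ in Theorem \ref{Lamdad}. You also spell out the identification $\arsh(M/\sqrt2)=\log\sqrt{(1+s\sqrt{2-s^2})/(1-s^2)}$, which the paper leaves implicit.
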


\begin{theorem}\label{Lamdad}
Let $Q(v_a\,,v_b\,,v_c\,,v_d)$\,, $d_3$, $d_4$ and $s$ be as in Theorem \ref{Lamdd}. Then
\begin{eqnarray*}
\log \sqrt{\frac{1+s}{1-s}}<d_3+d_4\le 2\log \frac{1+s\sqrt{2-s^2}}{1-s^2}.
\end{eqnarray*}
Equality holds in the right-hand side if and only if $v_c$ is on the bisector of the interior angle at $v_a$.
\end{theorem}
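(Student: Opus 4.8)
The plan is to parametrize a hyperbolic Lambert quadrilateral by the single geometric degree of freedom that is left once the diagonal length $\rho(v_a,v_c)$ is fixed. Since $s=\operatorname{th}\rho(v_a,v_c)$ is held constant, the natural variable is the angle $\theta$ that the diagonal $v_av_c$ makes with one of the two sides emanating from the right-angled vertex $v_a$; by symmetry $\theta$ ranges over $(0,\pi/4]$ up to swapping $d_3$ and $d_4$. The first step is to express $d_3$ and $d_4$ individually in terms of $s$ and $\theta$. I would place $v_a$ at the origin of $\B$, send the side $v_av_b$ and $v_av_d$ along two perpendicular hyperbolic rays, and use the right-angled hyperbolic triangles $\triangle(v_a,v_b,v_c)$ and $\triangle(v_a,v_d,v_c)$. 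Each of these has a right angle (at $v_b$ and at $v_d$ respectively), hypotenuse $\rho(v_a,v_c)$, and acute angle $\theta$ or $\pi/2-\theta$ at $v_a$. The hyperbolic trigonometric identity for a right triangle, $\operatorname{th}(\text{opposite leg})=\operatorname{sh}(\text{hypotenuse})\sin(\text{angle})/\cosh(\text{hypotenuse})$ — more precisely $\sin\theta=\operatorname{th}(d_3)/\operatorname{th}(\rho(v_a,v_c))$ type relations — should give closed forms. I expect to obtain
\[
\operatorname{th} d_3 = s\sin\theta,\qquad \operatorname{th} d_4 = s\cos\theta,
\]
or the analogous pair with $\operatorname{sh}$, which after taking $\arth$ yields $d_3=\operatorname{arth}(s\sin\theta)$ and $d_4=\operatorname{arth}(s\cos\theta)$.

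**The product bound (Theorem 1.2).**
With $d_3=\operatorname{arth}(s\sin\theta)$ and $d_4=\operatorname{arth}(s\cos\theta)$, the product is $f(\theta)=\operatorname{arth}(s\sin\theta)\operatorname{arth}(s\cos\theta)$. The claim is that this is maximized exactly at $\theta=\pi/4$, where $\sin\theta=\cos\theta=1/\sqrt2$, giving $d_3=d_4=\operatorname{arth}(s/\sqrt2)=\tfrac12\log\frac{1+s/\sqrt2}{1-s/\sqrt2}$; squaring and simplifying the radicand should reproduce $\bigl(\log\sqrt{(1+s\sqrt{2-s^2})/(1-s^2)}\bigr)^2$. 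The strategy is to show $f$ is symmetric about $\theta=\pi/4$ and has its unique critical point there. Setting $x=s\sin\theta$, $y=s\cos\theta$ with $x^2+y^2=s^2$ fixed, the problem becomes maximizing $\operatorname{arth}(x)\operatorname{arth}(y)$ subject to $x^2+y^2=s^2$; a Lagrange-multiplier or AM–GM/convexity argument using the fact that $t\mapsto\operatorname{arth}(\sqrt t)$ behaves suitably should force $x=y$ at the optimum. The cleanest route is probably to prove that $g(u)=\log\operatorname{arth}(s\sin u)+\log\operatorname{arth}(s\cos u)$ is concave in a way that pins the maximum at the symmetry point, or to directly verify $f'(\pi/4)=0$ and a sign analysis of $f'$ on $(0,\pi/4)$.

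**The sum bounds (Theorem 1.3) and the main obstacle.**
For the sum $h(\theta)=\operatorname{arth}(s\sin\theta)+\operatorname{arth}(s\cos\theta)$, the upper bound $2\operatorname{arth}(s/\sqrt2)=2\cdot\tfrac12\log\frac{1+s/\sqrt2}{1-s/\sqrt2}$ again occurs at $\theta=\pi/4$; one checks this equals $2\log\frac{1+s\sqrt{2-s^2}}{1-s^2}$ after rationalizing. This upper bound should follow from concavity of $h$ on $(0,\pi/4)$, established by showing $h''<0$ — equivalently, that $\phi\mapsto\operatorname{arth}(s\sin\phi)$ is concave, which reduces to a sign condition on $s$ and the elementary function $\operatorname{arth}$, valid since $s\in(0,1)$. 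The strict lower bound $\log\sqrt{(1+s)/(1-s)}=\operatorname{arth}(s)$ is the degenerate limit $\theta\to0^+$ (or $\theta\to\pi/2^-$), where one of $d_3,d_4$ tends to $0$ and the other to $\operatorname{arth}(s)$; strictness reflects that this limit is never attained by a genuine quadrilateral. I expect the \textbf{main obstacle} to be the two convexity/monotonicity verifications: establishing concavity of $\operatorname{arth}(s\sin\theta)$ in $\theta$ and the unimodality of the product $f$ require computing and sign-checking second derivatives of compositions of $\operatorname{arth}$ with trigonometric functions, and the resulting expressions are messy enough that isolating the correct inequality — uniformly in the parameter $s\in(0,1)$ — is the delicate part. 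A useful simplification will be to substitute $t=\operatorname{th}d_j$ and work with the variables $x=s\sin\theta,\,y=s\cos\theta$ on the arc $x^2+y^2=s^2$, reducing everything to one-variable calculus on $[0,s]$, which is where I would focus the technical effort.
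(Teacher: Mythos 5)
Your overall strategy---fix $s$, parametrize the quadrilateral by the angle $\theta$ between the diagonal $v_av_c$ and the side $v_av_b$, and locate the maximum of the sum at the symmetric position $\theta=\pi/4$ and the infimum at the degenerate endpoints---is exactly the paper's strategy (the paper uses $r=\cos\theta$, Proposition \ref{d3d4for} for the closed forms, and the unimodality in Lemma \ref{lecr}(3)). However, your closed forms for $d_3,d_4$ are wrong, and this is the fatal gap. In the right triangle $(v_a,v_b,v_c)$ with the right angle at $v_b$, the leg $d_3=\rho(v_c,v_b)$ is \emph{opposite} the angle $\theta$ at $v_a$, and the correct identity for the opposite leg is the sine rule $\sin\theta={\rm sh}\,d_3/\,{\rm sh}\,\rho(v_a,v_c)$; the tangent rule $\cos\theta={\rm th}\,d_1/\,{\rm th}\,\rho(v_a,v_c)$ applies only to the \emph{adjacent} leg $d_1=\rho(v_a,v_b)$. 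Hence $d_3=\arsh(m\sin\theta)$ and $d_4=\arsh(m\cos\theta)$ with $m={\rm sh}\,\rho(v_a,v_c)=s/\sqrt{1-s^2}$, which is Proposition \ref{d3d4for}; your formulas ${\rm th}\,d_3=s\sin\theta$, ${\rm th}\,d_4=s\cos\theta$ are precisely the formulas for the \emph{other} pair of adjacent sides $d_2,d_1$ meeting at $v_a$---the pair already treated in \cite{vw}, and the pair this paper is explicitly distinguishing from $d_3,d_4$ (indeed $d_3>d_2$ and $d_4>d_1$ by Theorem \ref{Lamd2d3}). You hedged with ``or the analogous pair with ${\rm sh}$,'' but every subsequent computation commits to the ${\rm th}$ version, so what your argument would bound is $d_1+d_2$, not $d_3+d_4$.

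Everything downstream inherits this error. With your parametrization the value at $\theta=\pi/4$ is $2\arth(s/\sqrt 2)=\log\frac{2+2\sqrt2\,s+s^2}{2-s^2}$, and your deferred claim that this ``equals $2\log\frac{1+s\sqrt{2-s^2}}{1-s^2}$ after rationalizing'' is false (test $s=1/2$); the correct maximum is $2\arsh(m/\sqrt2)=\log\frac{1+s\sqrt{2-s^2}}{1-s^2}$, which only emerges from the $\arsh$ parametrization. (As an aside, this also shows that the factor $2$ in the theorem's displayed upper bound is a typo: what the paper's proof establishes is $2\log\sqrt{(1+s\sqrt{2-s^2})/(1-s^2)}$, i.e.\ $\log\frac{1+s\sqrt{2-s^2}}{1-s^2}$.) Had you carried out the verification rather than deferring it, the mismatch would have surfaced. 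It is worth noting that your analytic plan does go through once the formulas are corrected: $\frac{d^2}{d\theta^2}\arsh(m\sin\theta)=-m(1+m^2)\sin\theta\,(1+m^2\sin^2\theta)^{-3/2}<0$, so $d_3+d_4$ is strictly concave in $\theta$ and symmetric about $\pi/4$, giving the maximum $2\arsh(m/\sqrt2)$ there and the unattained infimum $\arsh\,m=\arth\,s=\log\sqrt{(1+s)/(1-s)}$ at the endpoints---arguably a cleaner route than Lemma \ref{lecr}(3), but only after the parametrization is fixed.
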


\begin{figure}[h]
\centering
\includegraphics[width=7.5cm]{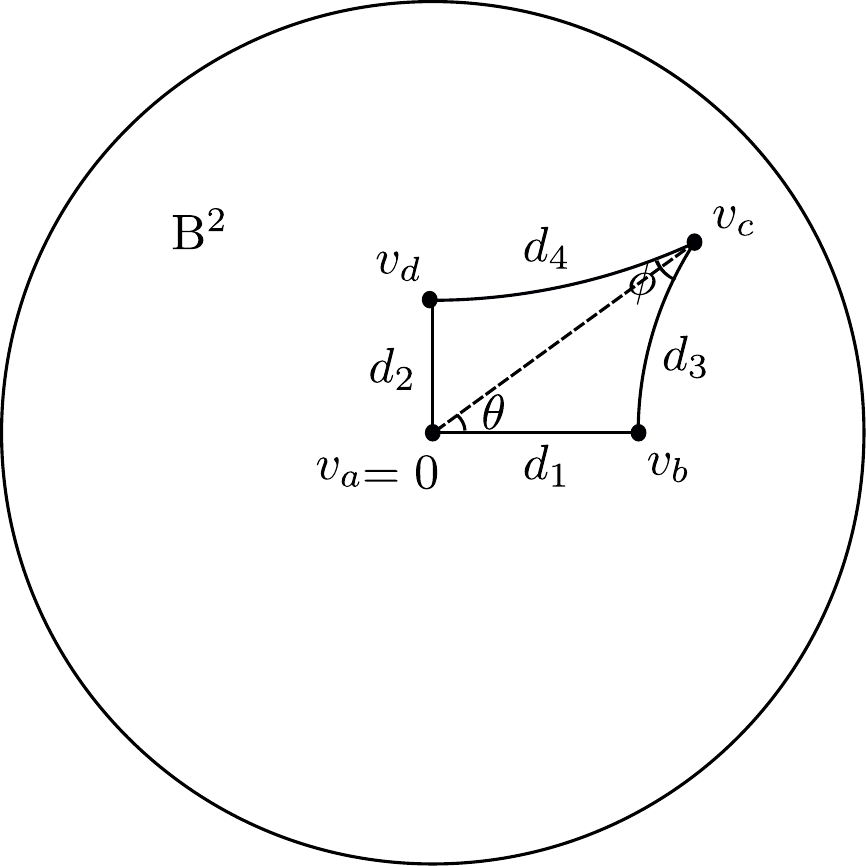}
\caption{\label{Lamb}A hyperbolic Lambert quadrilateral in $\B$}
\end{figure}

We denote the other two sides of $Q(v_a\,,v_b\,,v_c\,,v_d)$ by
$$d_1=\rho(v_a\,,v_b)\,\,\,\,{\rm and}\,\,\,\,d_2=\rho(v_a\,,v_d).$$

In a Lambert quadrilateral, the angle $\phi$ is related to the lengths
$d_1$, $d_2$ of the sides "opposite" to it as follows
\cite[Theorem 7.17.1]{be}:
$${\rm sh}\, d_1{\rm sh}\, d_2=\cos\phi.$$
See also the recent paper of A.~F.~Beardon and D.~Minda \cite[Lemma 5]{bm}.

In \cite[Corollary 1.3]{vw}, M. Vuorinen and G.-D. Wang provided a connection between $d_1$, $d_2$ and $s={\rm th} \rho(v_a,v_c)$ as follows
\begin{eqnarray}\label{d1d2th}
{\rm th}^2\,d_1+{\rm th}^2\,d_2=s^2.
\end{eqnarray}

Proposition \ref{d3d4for} (in section 2) yields the following corollary, which provides a connection between $d_3$, $d_4$ and $s={\rm th} \rho(v_a,v_c)$.

\begin{corollary} Let $s$, $d_3$ and $d_4$ be as in Theorem \ref{Lamdd}. Then
\begin{eqnarray}\label{d3d4sh}
{\rm sh}^2\,d_3+{\rm sh}^2\,d_4=\frac{s^2}{1-s^2}.
\end{eqnarray}
\end{corollary}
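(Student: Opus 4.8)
The plan is to realize $Q$ as the union of two right triangles sharing the diagonal $[v_a,v_c]$, to read off $d_3$ and $d_4$ from the standard relations for right-angled hyperbolic triangles, and then to reduce the claim to the Pythagorean-type identity \eqref{d1d2th}.

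First I would record that the diagonal $[v_a,v_c]$, whose length $D:=\rho(v_a,v_c)$ satisfies ${\rm th}\,D=s$, splits $Q$ into the triangle $v_av_bv_c$ with its right angle at $v_b$ and the triangle $v_av_cv_d$ with its right angle at $v_d$; this is precisely the decomposition underlying Proposition \ref{d3d4for}. Writing $\alpha_1$ and $\alpha_2$ for the two angles at $v_a$ cut off by this diagonal, the interior angle of $Q$ at $v_a$ being a right angle forces $\alpha_1+\alpha_2=\pi/2$.

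Next I would apply, in each right triangle, the sine relation ${\rm sh}(\text{leg opposite an angle})={\rm sh}(\text{hypotenuse})\sin(\text{that angle})$. In $v_av_bv_c$ the leg $v_bv_c=d_3$ is opposite $\alpha_1$, and in $v_av_cv_d$ the leg $v_cv_d=d_4$ is opposite $\alpha_2$; combined with ${\rm ch}\,D=1/\sqrt{1-s^2}$ and ${\rm sh}\,D=s/\sqrt{1-s^2}$ this yields
\begin{equation*}
{\rm sh}^2\,d_3=\frac{s^2\sin^2\alpha_1}{1-s^2},\qquad {\rm sh}^2\,d_4=\frac{s^2\sin^2\alpha_2}{1-s^2}.
\end{equation*}
Since $\alpha_2=\pi/2-\alpha_1$ we have $\sin^2\alpha_1+\sin^2\alpha_2=1$, and adding the two identities gives \eqref{d3d4sh} at once. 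Equivalently, one may note that the adjacent-leg relation gives ${\rm th}\,d_1=s\cos\alpha_1$ and ${\rm th}\,d_2=s\cos\alpha_2=s\sin\alpha_1$, so that ${\rm sh}^2\,d_3+{\rm sh}^2\,d_4=({\rm th}^2\,d_1+{\rm th}^2\,d_2)/(1-s^2)$, and then invoke \eqref{d1d2th} directly; this second route makes transparent why the formula mirrors \eqref{d1d2th}.

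The computations are entirely routine, and I expect no genuine analytic obstacle. The one point requiring care — and the step I would treat as the crux — is the bookkeeping of the decomposition: verifying that the diagonal really separates $Q$ into two right triangles with the right angles located at $v_b$ and $v_d$, and correctly matching each of $d_3,d_4$ with the angle it subtends, so that it is the substitution $\sin^2\alpha_1+\sin^2\alpha_2=1$ (rather than some product relation) that applies.
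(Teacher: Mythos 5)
Your proof is correct, but it takes a genuinely different route from the paper's. The paper obtains this corollary as an immediate consequence of Proposition \ref{d3d4for}: after a M\"obius normalization ($v_a=0$, $v_b$ on the real axis, $v_d$ on the imaginary axis, $v_c=te^{i\theta}$, $r=\cos\theta$), it computes $d_3$ and $d_4$ explicitly from the distance formula \eqref{sh} and arrives at ${\rm sh}\,d_3=m\,r'$, ${\rm sh}\,d_4=m\,r$ with $m=s/\sqrt{1-s^2}$, whence ${\rm sh}^2\,d_3+{\rm sh}^2\,d_4=m^2(r'^2+r^2)=m^2$. You instead argue synthetically: split $Q$ along the diagonal $[v_a,v_c]$ into two right triangles and apply the classical right-triangle relations (sine rule ${\rm sh}(\mbox{opposite leg})={\rm sh}(\mbox{hypotenuse})\sin(\mbox{angle})$ and cosine rule ${\rm th}(\mbox{adjacent leg})={\rm th}(\mbox{hypotenuse})\cos(\mbox{angle})$, cf. \cite[\S 7.11]{be}), together with $\alpha_1+\alpha_2=\pi/2$. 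Your angles match the paper's parametrization via $\sin\alpha_1=r'$, $\sin\alpha_2=r$, so in effect you re-derive the key identity of Proposition \ref{d3d4for} without any coordinate computation; this is shorter, coordinate-free, and makes the parallel with \eqref{d1d2th} transparent (your second route even re-proves \eqref{d1d2th} along the way, rather than quoting it). What the paper's computational route buys in exchange is the explicit parametrization of all four sides by the single variable $r$, which is exactly what the proofs of Theorems \ref{Lamdd} and \ref{Lamdad} need (monotonicity and extremal behaviour in $r$ via Lemma \ref{lecr}); the corollary then falls out for free. Finally, the bookkeeping point you flag as the crux is settled by convexity: all interior angles of $Q$ are at most $\pi/2<\pi$, so $Q$ is convex, the diagonal $[v_a,v_c]$ lies in $Q$, and the right angles at $v_b$ and $v_d$ are full angles of the two triangles, with $d_3$ and $d_4$ opposite $\alpha_1$ and $\alpha_2$ respectively, as you claim.
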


By \eqref{d1d2th} and \eqref{d3d4sh}, we get the following equality
$$\frac{1}{{\rm th}^2\,d_1+{\rm th}^2\,d_2}-\frac{1}{{\rm sh}^2\,d_3+{\rm sh}^2\,d_4}=1,$$
which shows the relation between the four sides of the hyperbolic Lambert quadrilateral $Q(v_a\,,v_b\,,v_c\,,v_d)$.

This paper is organized as follows. In Section 2, the notation and facts on the hyperbolic metric are stated, and some lemmas on the inverse hyperbolic trigonometric functions are proved. Section 3 is devoted to the H\"older convexity of the inverse hyperbolic sine function. The main results are proved in Section 4.

\section{Preliminaries}

It is assumed that the reader is familiar with basic definitions of geometric
function theory, see e.g. \cite{be,v}.
We recall here some basic information on hyperbolic geometry \cite{be}.

The chordal distance is defined by
\beq\label{q}
\left\{\begin{array}{ll}
q(x,y)=\frac{|x-y|}{\sqrt{1+|x|^2}\sqrt{1+|y|^2}},&\,\,\, x\,,y\neq\infty\\
q(x,\infty)=\frac{1}{\sqrt{1+|x|^2}},&\,\,\, x\neq\infty,
\end{array}\right.
\eeq
for $x,y\in\overline\C$.

For an ordered quadruple $a,b,c,d$ of distinct points in $\overline{\C}$ we define the absolute ratio by
$$|a,b,c,d|=\frac{q(a,c)q(b,d)}{q(a,b)q(c,d)}.$$
It follows from (\ref{q}) that for distinct points $a,b,c,d\in \C$
\beq\label{crossratio}
|a,b,c,d|=\frac{|a-c||b-d|}{|a-b||c-d|}.
\eeq
The most important property of the absolute ratio is M\"obius invariance, see \cite[Theorem 3.2.7]{be}, i.e., if $f$ is a M\"obius transformation, then
$$|f(a),f(b),f(c),f(d)|=|a,b,c,d|,$$
for all distinct $a,b,c,d$ in $\overline{\C}$.

For a domain $G\varsubsetneq \C$ and a continuous weight function $w: G\rightarrow(0,\infty)\,,$  we define the weighted length of a rectifiable curve $\gamma\subset G$ to be
$$l_w(\gamma)=\int_{\gamma}w(z)|dz|$$
and the weighted distance between two points $x,y \in G $ by
$$d_w(x,y)=\inf_{\gamma}l_w(\gamma),$$
where the infimum is taken over all rectifiable curves in $G$ joining $x=x_1+i x_2$ and $y=y_1+i y_2$. It is easy to see that $d_w$ defines a metric on $G$ and $(G,d_w)$ is a metric space. We say that a curve $\gamma: [0,1]\rightarrow G$ is a geodesic joining $\gamma(0)$ and $\gamma(1)$ if for all $t\in (0,1)$, we have
$$d_w(\gamma(0),\gamma(1))=d_w(\gamma(0),\gamma(t))+d_w(\gamma(t),\gamma(1)).$$
The hyperbolic distance in the upper half plane $\UH=\{x=x_1+i x_2\in\C\,|\,x_2>0\}$ and the unit disk $\B$ is defined in terms of the weight functions
$w_{\UH}(x)=1/{x_2}$ and  $w_{\B}(x)=2/{(1-|x|^2)}\,,$ respectively.  We also have the corresponding explicit formulas
\beq\label{cosh}
\cosh\rho_{\UH}(x,y)=1+\frac{|x-y|^2}{2x_2y_2}
\eeq
for all $x,y\in \UH$  \cite[p.35]{be}, and
\beq\label{sh}
\rho_{\B}(x,y)=2\arsh\,\frac{|x-y|}{\sqrt{(1-|x|^2)(1-|y|^2)}}
\eeq
for all $x,y\in \B$  \cite[p.40]{be}. In particular, for $t\in(0,1)$,
\beq\label{arth}
\rho_{\B}(0,t)=\log\frac{1+t}{1-t}=2\arth\,t.
\eeq

There is a third equivalent way to express the hyperbolic distances.
Let $G\in\{\UH,\B\}$, $x,y\in{G}$ and let $L$ be an arc of a circle
perpendicular to $\partial G$ with $x,y\in L$ and let
$\{x_*,y_*\}=L\cap\partial G$, the points being labeled so that $x_*, x, y, y_*$ occur in this order on $L$. Then by \cite[(7.26)]{be}
\beq\label{rho}
\rho_G(x,y)=\sup\{\log|a,x,y,b|:a,b\in\partial G\}=\log|x_*,x,y,y_*|.
\eeq

The hyperbolic distance remains invariant under M\"obius transformations of $G$ onto $G'$ for $G,\,G'\in\{\UH,\B\}$.

Hyperbolic geodesics are  arcs of circles which are orthogonal to the boundary of the domain. More precisely, for $a,b\in \B$ (or $\UH)$, the hyperbolic geodesic segment joining $a$ to $b$ is an arc of a circle orthogonal to $S^1$ (or $\partial \UH)$. In a limiting case the points $a$ and $b$ are located on a Euclidean line through $0$ (or located on a normal of $\partial \UH$), see \cite{be}.  Therefore, the points $x_*$ and $y_*$ are the end points of the hyperbolic geodesic. For any two distinct points the hyperbolic geodesic segment is unique (see Figure \ref{h2} and \ref{b2}). For basic facts about the hyperbolic geometry we suggest the interested readers to refer \cite{a}, \cite{be} and \cite{kl}.

\medskip
\begin{figure}[h]
\begin{minipage}[t]{0.45\linewidth}
\centering
\includegraphics[width=8cm]{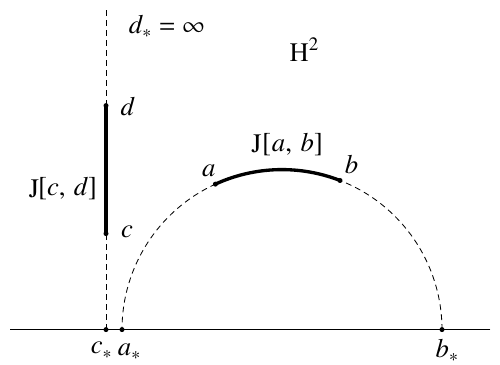}
\caption{\label{h2} Hyperbolic geodesic segments in $\UH$}
\end{minipage}
\hfill
\hspace{1cm}
\begin{minipage}[t]{0.45\linewidth}
\centering
\includegraphics[width=6cm]{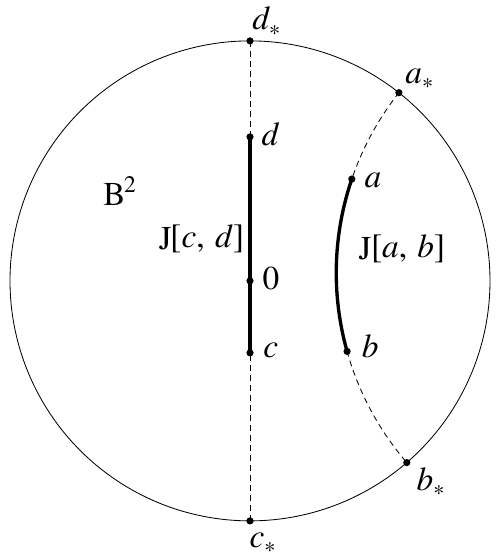}
\caption{\label{b2} Hyperbolic geodesic segments in $\B$}
\end{minipage}
\end{figure}
\medskip

By \cite[Exercise 1.1.27]{k} and \cite[Lemma 2.2]{kv}, for  $x\,,y \,\in \mathbb{B}^2\setminus\{0\}$ such that $0,x,y$ are noncollinear,
the circle $S^1(a,r_a)$ containing $x, y$ is orthogonal to the unit circle, where
\beq\label{orar}
a =i\frac{y(1+|x|^2)-x(1+|y|^2)}{2(x_2y_1-x_1y_2)}\,\,\,\, and\, \,\,\,r_a=\frac {|x-y|\big|x|y|^2-y\big|}{2|y||x_1y_2-x_2y_1|}\,.
\eeq

\medskip

The following {\em
monotone form of l'H${\rm \hat{o}}$pital's rule} is useful in deriving monotonicity properties and obtaining inequalities. See the extensive bibliography of \cite{avz}.

\begin{lemma} \label{lhr}{\rm \cite[Theorem 1.25]{avv1}}.
For $-\infty<a<b<\infty$, let $f,\,g: [a,b]\rightarrow \mathbb{R}$ be continuous on $[a,b]$, and be differentiable on $(a,b)$, and let $g'(x)\neq 0$ on $(a,b)$. If $f'(x)/g'(x)$ is increasing(deceasing) on $(a,b)$, then so are
\begin{eqnarray*}
\frac{f(x)-f(a)}{g(x)-g(a)}\,\,\,\,\,\,\,and\,\,\,\,\,\,\,\,\frac{f(x)-f(b)}{g(x)-g(b)}.
\end{eqnarray*}
If $f'(x)/g'(x)$ is strictly monotone, then the monotonicity in the conclusion is also strict.
\end{lemma}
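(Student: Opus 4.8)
The plan is to reduce everything to Cauchy's Mean Value Theorem together with a direct computation of the derivative of the two quotients. I will treat the increasing case for $(f(x)-f(a))/(g(x)-g(a))$ in detail; the remaining cases are symmetric.

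First I would record a structural fact that makes the quotients well defined. Since $g'$ never vanishes on $(a,b)$ and derivatives enjoy the intermediate value property (Darboux's theorem), $g'$ keeps a constant sign on $(a,b)$. Hence $g$ is strictly monotone, so $g(x)-g(a)\neq 0$ and $g(x)-g(b)\neq 0$ for every $x\in(a,b)$, and the two quotients are differentiable there.

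Next, writing $h(x)=(f(x)-f(a))/(g(x)-g(a))$, I would differentiate to see that the sign of $h'(x)$ equals the sign of
\[
N(x)=f'(x)\,(g(x)-g(a))-(f(x)-f(a))\,g'(x).
\]
The key move is to apply Cauchy's Mean Value Theorem on $[a,x]$ to produce a point $c\in(a,x)$ with $f(x)-f(a)=(f'(c)/g'(c))(g(x)-g(a))$. Substituting this into $N(x)$ and factoring out $g'(x)$ gives
\[
N(x)=(g(x)-g(a))\,g'(x)\left(\frac{f'(x)}{g'(x)}-\frac{f'(c)}{g'(c)}\right).
\]
Because $g'$ has constant sign, the product $(g(x)-g(a))g'(x)$ is positive; because $c<x$ and $f'/g'$ is increasing, the bracket is nonnegative. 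Thus $N(x)\ge 0$, so $h$ is increasing. For the second quotient $(f(x)-f(b))/(g(x)-g(b))$ the same computation, but with Cauchy's theorem applied on $[x,b]$ (so now $c>x$), produces two factors that have each switched sign, and whose product is again nonnegative.

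The decreasing hypothesis is handled by replacing $f$ with $-f$, and the strict statement follows verbatim: if $f'/g'$ is strictly increasing then $c<x$ forces the bracket to be strictly positive, whence $N(x)>0$ and $h$ is strictly increasing. I do not expect a genuine obstacle here; the only points requiring care are the constant-sign argument for $g'$ (to guarantee that the denominators do not vanish and that the quotients are differentiable) and the sign bookkeeping in the factorization of $N(x)$, which must be checked to hold for both an increasing and a decreasing $g$.
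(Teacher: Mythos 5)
Your proof is correct, but there is nothing in the paper to compare it against: the paper does not prove this lemma at all, it simply quotes it as Theorem 1.25 of the Anderson--Vamanamurthy--Vuorinen monograph cited as \cite{avv1}. Your argument is, in fact, the standard proof of that cited theorem: Darboux's theorem gives $g'$ a constant sign (so the denominators $g(x)-g(a)$, $g(x)-g(b)$ never vanish and the quotients are differentiable), Cauchy's Mean Value Theorem supplies the point $c$, and the factorization
$$N(x)=\bigl(g(x)-g(a)\bigr)\,g'(x)\left(\frac{f'(x)}{g'(x)}-\frac{f'(c)}{g'(c)}\right)$$
reduces everything to a sign count. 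All the delicate points are handled correctly: the positivity of $(g(x)-g(a))g'(x)$ regardless of whether $g$ increases or decreases, the double sign flip for the quotient anchored at $b$ (there $c>x$ and $g(x)-g(b)$ has the opposite sign, so $N\ge 0$ again), the reduction of the decreasing case to the increasing one via $-f$, and the strict version from the strict inequality in the bracket. So your write-up can stand as a self-contained proof of the lemma, which the paper itself leaves as an external citation.
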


From now on we let $r'=\sqrt{1-r^2}$ for $0<r<1$.

\begin{lemma}\label{lecr}
Let $s\in(0,1)$, $m={s}/\sqrt{1-s^2}$ and $r\in(0,1)$.

(1) The function $f_1(r)\equiv\arsh(m\,r)/ \arth(s\,r)$ is strictly decreasing with range $(1,1/\sqrt{1-s^2})$.

(2) The function $f_2(r)\equiv\arsh(m\,r) \arsh(m\,r')$ is strictly increasing on $(0,\frac{\sqrt{2}}{2}]$ and strictly decreasing on $[\frac{\sqrt{2}}{2}, 1)$ with maximum value $(\arsh(\frac{\sqrt{2}}{2} m))^2$.

(3) The function $f_3(r)\equiv\arsh(m\,r)+\arsh(m\,r')$ is strictly increasing on $(0,\frac{\sqrt{2}}{2}]$ and strictly decreasing on $[\frac{\sqrt{2}}{2}, 1)$ with range $(\arsh\,m,\,2\arsh\,(\frac{\sqrt{2}}{2} m)]$.
\end{lemma}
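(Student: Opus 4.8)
The plan is to handle the three parts with two different techniques: the monotone form of l'Hôpital's rule (Lemma \ref{lhr}) for part~(1), and a direct sign analysis of the derivative exploiting the symmetry $r\leftrightarrow r'$ for parts~(2) and~(3). For part~(1) I would first record the elementary derivatives
\[
\frac{d}{dr}\arsh(mr)=\frac{m}{\sqrt{1+m^2r^2}},\qquad \frac{d}{dr}\arth(sr)=\frac{s}{1-s^2r^2},
\]
and note that numerator and denominator of $f_1$ both vanish at $r=0$. Substituting $m^2=s^2/(1-s^2)$ collapses the quotient of derivatives to
\[
\frac{m(1-s^2r^2)}{s\sqrt{1+m^2r^2}}=\frac{1-s^2r^2}{\sqrt{1-s^2+s^2r^2}}.
\]
Writing $u=s^2r^2$, this is $(1-u)/\sqrt{1-s^2+u}$, whose numerator decreases and denominator increases in $u$, so the quotient is strictly decreasing in $r$; Lemma~\ref{lhr} then yields that $f_1$ is strictly decreasing. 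The range follows from the two one-sided limits: as $r\to0^+$ the quotient tends to $m/s=1/\sqrt{1-s^2}$, and at $r=1$ the identity $\arsh m=\arth s$ (coming from $m+\sqrt{m^2+1}=\sqrt{(1+s)/(1-s)}$) gives $f_1(1^-)=1$.

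For parts~(2) and~(3) the key observation is the symmetry $f_i(r)=f_i(r')$, so it suffices to establish strict monotonicity on $(0,\tfrac{\sqrt2}{2})$, where $r<r'$; the behaviour on $[\tfrac{\sqrt2}{2},1)$ is then forced by the decreasing change of variable $r\mapsto r'$. Differentiating with $\frac{dr'}{dr}=-r/r'$, I obtain for part~(3)
\[
f_3'(r)=\frac{m}{\sqrt{1+m^2r^2}}-\frac{mr}{r'\sqrt{1+m^2r'^2}},
\]
and $f_3'>0$ reduces, after clearing denominators and squaring, to the clean inequality $(1-2r^2)(1+m^2)>0$, valid exactly when $r<\tfrac{\sqrt2}{2}$. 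For part~(2) the derivative is
\[
f_2'(r)=m\left(\frac{\arsh(mr')}{\sqrt{1+m^2r^2}}-\frac{r\,\arsh(mr)}{r'\sqrt{1+m^2r'^2}}\right),
\]
and positivity follows by comparing factors: on $(0,\tfrac{\sqrt2}{2})$ one has $\arsh(mr')>\arsh(mr)$, $\sqrt{1+m^2r^2}<\sqrt{1+m^2r'^2}$, and $r/r'<1$, so the first term dominates the second.

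The maximal values at $r=\tfrac{\sqrt2}{2}$ (where $r=r'=\tfrac{\sqrt2}{2}$) are read off directly, while the remaining endpoints come from $\arsh0=0$, giving the infimum $\arsh m$ for $f_3$; in each case the maximum is attained (closed endpoint) but the infimum is only a limit (open endpoint), matching the stated half-open ranges. I expect the main obstacle to be part~(2): there the derivative is a difference of two products of three factors each, and, unlike part~(3), it does not collapse to a single polynomial inequality, so one must instead verify that all three monotone comparisons point the same way simultaneously. A secondary point requiring care is bookkeeping the open versus closed endpoints so that the ranges in~(1) and~(3) are stated correctly.
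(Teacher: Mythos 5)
Your proposal is correct and follows essentially the same route as the paper: part (1) uses the monotone l'H\^opital rule (Lemma \ref{lhr}) on the very same quotient of derivatives together with the identity $\arsh m=\arth s$, and parts (2)--(3) determine the sign of $f_i'$ by comparing quantities evaluated at $r$ and at $r'$, with the sign change at $r=\tfrac{\sqrt2}{2}$. Your factor-by-factor comparison in (2) and the squaring trick in (3) are just explicit reformulations of the paper's observation that $\phi_2(t)=t\sqrt{1+m^2t^2}\,\arsh(mt)$ and $\phi_3(t)=t\sqrt{1+m^2t^2}$ are strictly increasing, so the two proofs coincide in substance.
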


\begin{proof}
(1) Let $f_{11}(r)=\arsh(m\,r)$ and $f_{12}(r)=\arth(s\,r)$. Then $f_{11}(0^+)=f_{12}(0^+)=0$. By differentiation,
$$
\frac{f'_{11}(r)}{f'_{12}(r)}=\frac{1-s^2r^2}{\sqrt{1-s^2r'^2}}
$$
which is strictly decreasing. Hence by Lemma \ref{lhr}, $f_1$ is strictly decreasing with $f_1(0^+)=1/\sqrt{1-s^2}$ and $f_1(1^-)=1$ because
$$\arsh\,m=\log\sqrt{\frac{1+s}{1-s}}=\arth\,s.$$

(2) By differentiation,
$$f'_2(r)=\frac{m}{r'\sqrt{1+m^2r'^2}\sqrt{1+m^2r^2}}(\phi_2(r')-\phi_2(r)),$$
where $\phi_2(r)=r\sqrt{1+m^2r^2}\arsh(m\,r)$. It is clear that $\phi_2$ is strictly increasing. Therefore, we get the result.

(3) By differentiation,
$$f'_3(r)=\frac{m}{r'\sqrt{1+m^2r'^2}\sqrt{1+m^2r^2}}(\phi_3(r')-\phi_3(r)),$$
where $\phi_3(r)=r\sqrt{1+m^2r^2}$. It is clear that $\phi_3$ is strictly increasing and hence the result follows immediately.
\end{proof}

\begin{proposition}\label{d3d4for}
Let $Q (v_a\,,v_b\,,v_c\,,v_d)$ be a hyperbolic Lambert quadrilateral in $\B$ and let the quadruple of interior angles
$(\frac{\pi}{2}\,,\frac{\pi}{2}\,,\phi\,,\frac{\pi}{2})$, $\phi\in[0, \pi/2)\,,$ corresponds to the quadruple $(v_a\,,v_b\,,v_c\,,v_d)$ of vertices. Let $d_1=\rho(v_a\,,v_b)\,,$  $d_2=\rho(v_a\,,v_d)$
$d_3=\rho(v_c\,,v_b)\,,$  $d_4=\rho(v_c\,,v_d)$, and
let $s={\rm th} \rho(v_a,v_c)\in(0,1)$ and $m=\frac{s}{\sqrt{1-s^2}}$.
Then
$$d_1=\arth(s\,r),\,\,\,\,\,\,\,\,d_2=\arth(s\,r'),\,\,\,\,\,\,\,\,d_3=\arsh(m\,r'),\,\,\,\,\,\,\,\,d_4=\arsh(m\,r),$$
where $r\in(0,1)$ is a constant.
\end{proposition}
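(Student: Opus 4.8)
The plan is to cut the quadrilateral along the diagonal $v_av_c$ and to apply the classical formulas of hyperbolic right-triangle trigonometry to the two resulting pieces. Since all four interior angles of $Q$ are at most $\pi/2<\pi$, the quadrilateral is convex, so the geodesic segment $[v_a,v_c]$ lies inside $Q$ and splits the right angle at $v_a$ additively. I would write $\theta\in(0,\pi/2)$ for the angle that $[v_a,v_c]$ makes with the side $[v_a,v_b]$ at $v_a$; then it makes the angle $\pi/2-\theta$ with $[v_a,v_d]$. The diagonal decomposes $Q$ into the triangles $v_av_bv_c$ and $v_av_dv_c$. The angle of each triangle at $v_b$, respectively $v_d$, coincides with the interior angle of $Q$ there, since it is subtended by the same pair of geodesic rays; hence each triangle has a right angle at $v_b$, respectively at $v_d$, and $[v_a,v_c]$ is their common hypotenuse.

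First I would record the data attached to the hypotenuse: by hypothesis $\tanh\rho(v_a,v_c)=s$, so $\cosh\rho(v_a,v_c)=1/\sqrt{1-s^2}$ and $\sinh\rho(v_a,v_c)=s/\sqrt{1-s^2}=m$. Next I would apply to the right triangle $v_av_bv_c$ (right angle at $v_b$, acute angle $\theta$ at $v_a$) the two standard relations expressing an opposite leg and an adjacent leg through the hypotenuse and the acute angle (see the trigonometric formulas for hyperbolic right triangles in \cite[Chapter~7]{be}):
\[
\sinh d_3=\sinh\rho(v_a,v_c)\,\sin\theta=m\sin\theta,\qquad
\tanh d_1=\tanh\rho(v_a,v_c)\,\cos\theta=s\cos\theta .
\]
The same relations applied to the right triangle $v_av_dv_c$ (right angle at $v_d$, acute angle $\pi/2-\theta$ at $v_a$) yield
\[
\sinh d_4=m\sin(\tfrac{\pi}{2}-\theta)=m\cos\theta,\qquad
\tanh d_2=s\cos(\tfrac{\pi}{2}-\theta)=s\sin\theta .
\]

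Finally I would set $r=\cos\theta\in(0,1)$, so that $r'=\sqrt{1-r^2}=\sin\theta$, and read off $d_1=\arth(s\,r)$, $d_2=\arth(s\,r')$, $d_3=\arsh(m\,r')$ and $d_4=\arsh(m\,r)$, which is exactly the assertion; as the shape of the quadrilateral varies, $\theta$ sweeps out $(0,\pi/2)$, so $r$ is a genuine parameter in $(0,1)$. (As a consistency check, these give $\tanh^2 d_1+\tanh^2 d_2=s^2$ and $\sinh^2 d_3+\sinh^2 d_4=m^2=s^2/(1-s^2)$, matching \eqref{d1d2th} and the corollary.) The only genuinely geometric point that must be argued with care—and hence the main obstacle—is the decomposition step: that $[v_a,v_c]$ meets the interior, that it partitions the right angle at $v_a$ into $\theta$ and $\pi/2-\theta$, and that the right angles of $Q$ at $v_b$ and $v_d$ pass unchanged to the two sub-triangles. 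Once this is secured, the rest is a direct substitution into the right-triangle relations.
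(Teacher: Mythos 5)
Your proof is correct, but it takes a genuinely different route from the paper's. The paper argues computationally: it uses M\"obius invariance to normalize $v_a=0$, $v_b$ on the real axis, $v_d$ on the imaginary axis, $v_c=te^{i\theta}$, sets $r=\cos\theta$, quotes $d_1,d_2$ from the proof of Theorem 1.1 in \cite{vw}, and then computes $d_3=\rho(v_c,v_b)$ from the explicit distance formula \eqref{sh} together with the orthogonal-circle data \eqref{orar}; this produces an unwieldy expression $d_3=2\arsh f_s(r)$ with $f_s$ built from $g_s(r)=(1-\sqrt{1-s^2r^2})/(sr)$, and the decisive step is the algebraic identity $f_s(r)\sqrt{1+f_s^2(r)}=mr'/2$, verified ``by calculation'', which gives ${\rm sh}\,d_3=mr'$ and ${\rm sh}\,d_4=mr$. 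You instead cut along the diagonal $[v_a,v_c]$ and invoke the classical right-triangle relations $\sinh(\mathrm{opposite\ leg})=\sinh(\mathrm{hypotenuse})\sin\alpha$ and $\tanh(\mathrm{adjacent\ leg})=\tanh(\mathrm{hypotenuse})\cos\alpha$ (these are indeed in \cite[Theorem 7.11.2]{be}, which the paper's preliminaries do not state, so you should cite it precisely). Your route buys a uniform, transparent derivation of all four formulas at once: the appearance of $r=\cos\theta$ and $r'=\sin\theta$ is explained geometrically, the opaque identity disappears, and $d_1,d_2$ need not be imported from \cite{vw}. The cost is the decomposition step you rightly flag as the main obstacle (convexity of $Q$, additivity of the angle at $v_a$, persistence of the right angles at $v_b$, $v_d$); while your appeal to ``all interior angles $\le\pi/2$'' is a standard fact for geodesic polygons, the cleanest fix is to borrow the paper's normalization: with $v_a=0$ and the two sides at $v_a$ lying on the coordinate axes, geodesics through $0$ are Euclidean segments, so the diagonal $[0,v_c]$ visibly lies in $Q$ and splits the right angle at $0$ into $\theta$ and $\pi/2-\theta$, after which your trigonometric argument goes through verbatim and hyperbolic angles at $0$ agree with Euclidean ones, identifying your $r$ with the paper's $r=\cos\theta$.
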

\begin{proof}
Since the hyperbolic distance is M\"obius invariant, we may assume that $v_a=0$, $v_b$ is on the real axis $X$, $v_d$ is on the imaginary axis $Y$ and $v_c=t e^{i\theta}$, $0<t<1$ and $0<\theta<\frac{\pi}{2}$ (see Figure \ref{Lamb}). Let $r=\cos\theta$.
Then by (\ref{orar}) the circle $S^1(b,r_b)$ through $v_c$ and $\overline{v_c}$ is orthogonal to $\partial\B$, where
$$
b=\frac{1+t^2}{2t\cos\theta}\,\,\,\,\,{\rm and}\,\,\,\,r_b=\frac{\sqrt{(1+t^2)^2-4t^2\cos^2\theta}}{2t\cos\theta}.
$$
The distances $d_1$ and $d_2$ can either be obtained from the proof of Theorem 1.1 in \cite{vw} or be directly derived by \eqref{arth}.

By \eqref{sh}, we get
$$
d_3=\rho(v_c,v_b)=2\arsh\,f_s(r),
$$
where
$$f_s(r)=\sqrt{\frac{g^2_s(r)+g^2_s(1)-2 r g_s(r)g_s(1)}{(1-g^2_s(r))(1-g^2_s(1))}}$$
and
$$g_s(r)=\frac{1-\sqrt{1-s^2r^2}}{s\,r}.$$
Similarly, we get
$$
d_4=\rho(v_c,v_d)=2\arsh\,f_s(r').
$$
By calculation, we get the equality
$$f_s(r)\sqrt{1+f^2_s(r)}=\frac{m r'}{2},$$
which implies ${\rm sh}d_3=m r'$ and ${\rm sh}d_4=m r$. Therefore,
$$d_3=\arsh(m\,r')\,\,\,\,\,\,{\rm and}\,\,\,\,\,\,d_4=\arsh(m\,r).$$
\end{proof}

By Proposition \ref{d3d4for} and Lemma \ref{lecr}(1), we have the following theorem.
\begin{theorem}\label{Lamd2d3}
Let $Q(v_a\,,v_b\,,v_c\,,v_d)$\,,  $d_1$, $d_2$, $d_3$, $d_4$ and $s$ be as in Proposition \ref{d3d4for}.
Then
$$d_2<d_3<\frac{1}{\sqrt{1-s^2}}d_2\,\,\,\,{\rm and}\,\,\,\,d_1<d_4<\frac{1}{\sqrt{1-s^2}}d_1.$$
\end{theorem}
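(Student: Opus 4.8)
The plan is to reduce both chains of inequalities to a single application of Lemma \ref{lecr}(1), after substituting the closed-form expressions for the four sides supplied by Proposition \ref{d3d4for}. The whole content of the theorem turns out to be the observation that the two ratios in question are precisely values of the function $f_1$ whose range was already pinned down.

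First I would form the ratio of the pair $d_4$ and $d_1$. By Proposition \ref{d3d4for} one has $d_4=\arsh(m\,r)$ and $d_1=\arth(s\,r)$, so
$$\frac{d_4}{d_1}=\frac{\arsh(m\,r)}{\arth(s\,r)}=f_1(r),$$
where $f_1$ is exactly the function studied in Lemma \ref{lecr}(1). Since $r\in(0,1)$, that lemma gives $f_1(r)\in(1,1/\sqrt{1-s^2})$, i.e. $1<d_4/d_1<1/\sqrt{1-s^2}$. As $d_1=\arth(s\,r)>0$, multiplying through by $d_1$ yields $d_1<d_4<\frac{1}{\sqrt{1-s^2}}\,d_1$, which is the second assertion.

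For the first assertion I would run the same argument with $r$ replaced by $r'=\sqrt{1-r^2}$. By Proposition \ref{d3d4for} again, $d_3=\arsh(m\,r')$ and $d_2=\arth(s\,r')$, so
$$\frac{d_3}{d_2}=\frac{\arsh(m\,r')}{\arth(s\,r')}=f_1(r').$$
The only extra point to verify is that $r'\in(0,1)$ whenever $r\in(0,1)$, which is immediate; hence Lemma \ref{lecr}(1) applies verbatim to the argument $r'$ and gives $1<f_1(r')<1/\sqrt{1-s^2}$. Multiplying by $d_2=\arth(s\,r')>0$ produces $d_2<d_3<\frac{1}{\sqrt{1-s^2}}\,d_2$.

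There is essentially no obstacle here: the statement is a direct corollary of the two cited results, and the only work is the bookkeeping of which inverse hyperbolic functions occupy the numerator and denominator of each ratio. The single point worth stressing is that the interval $(1,1/\sqrt{1-s^2})$ in Lemma \ref{lecr}(1) is open, so both inequalities are strict; the endpoints are approached only in the degenerate limits $r\to0^+$ and $r\to1^-$, which do not correspond to a genuine Lambert quadrilateral.
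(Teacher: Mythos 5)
Your proof is correct and is essentially the paper's own argument: the paper derives this theorem in one line by combining Proposition \ref{d3d4for} with Lemma \ref{lecr}(1), exactly as you do by identifying $d_4/d_1=f_1(r)$ and $d_3/d_2=f_1(r')$ and invoking the range $(1,1/\sqrt{1-s^2})$ of $f_1$.
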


\begin{remark}
M. Vuorinen and G.-D. Wang gave the bounds for the product and sum of $d_1$ and $d_2$ \cite[Theorem 1.1 and Theorem 1.2]{vw}, by which and Theorem \ref{Lamd2d3}, we can get the bounds for the product and sum of $d_3$ and $d_4$. But the results are weaker than that of Theorem \ref{Lamdd} and Theorem \ref{Lamdad} in this paper.
\end{remark}

\section{The H\"older convexity for the inverse hyperbolic sine function}

The inverse hyperbolic sine and tangent functions play important roles in the study of the hyperbolic metric. In \cite[Theorem 2.21]{vw}, the authors showed the H\"older convexity of the inverse hyperbolic tangent function.  We will study the similar property of the inverse hyperbolic sine function in this section.

For $r,s\in(0,+\infty)$, the \emph{H\"older mean of order $p$} is defined by
$$H_p(r,s)=\left(\frac{r^p+s^p}{2}\right)^{1/p} \quad\mbox{for}\quad{p\neq 0},
\quad H_0(r,s)=\sqrt{r\,s}.$$
For $p=1$, we get the arithmetic mean $A=H_1$; for $p=0$, the
geometric mean $G=H_0$; and for $p=-1$, the harmonic mean $H=H_{-1}$.
It is well-known that $H_p(r,s)$ is continuous and increasing with respect to $p$ \cite[P.203, Theorem 1]{bu}.
Many other interesting properties of H\"older means are given in \cite{bu} and \cite{hlp}.

A function $f:I\to J$ is called {\it $H_{p,q}$-convex(concave)} if it satisfies
   $$f(H_p(r,s))\leq(\geq)H_q(f(r),f(s))$$
for all $r,s\in I$, and  {\it strictly $H_{p,q}$-convex(concave)} if the
inequality is strict except for $r=s$. For $H_{p,q}$-convexity
of some special functions the reader is referred to \cite{avv2, avz, ba1, ba, cwzq, zwc}.

\begin{lemma}\label{t1l1}
Let $r\in(0,+\infty)$.

(1) The function $f_1(r)\equiv\frac{\arsh\,r}{r}$ is strictly decreasing with range $(0,1)$.

(2) The function $f_2(r)\equiv\frac{r(1+r^2)-\sqrt{1+r^2}\,\arsh\,r}{r^3}$ is strictly increasing with range $(2/3,1)$.
\end{lemma}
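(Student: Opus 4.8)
The plan is to prove both monotonicity statements by the standard device of differentiating and reducing the sign analysis to a single auxiliary function, exactly in the spirit of the earlier lemmas in this paper. For part (1), I would set $f_1(r)=\arsh r/r$ and compute
\[
f_1'(r)=\frac{\frac{r}{\sqrt{1+r^2}}-\arsh r}{r^2}.
\]
The sign of $f_1'$ is therefore governed by the numerator $h(r)=r/\sqrt{1+r^2}-\arsh r$. Since $h(0^+)=0$ and $h'(r)=1/(1+r^2)^{3/2}-1/\sqrt{1+r^2}=\bigl(1-(1+r^2)\bigr)/(1+r^2)^{3/2}=-r^2/(1+r^2)^{3/2}<0$, the function $h$ is strictly decreasing from $0$, hence $h(r)<0$ on $(0,\infty)$, so $f_1'(r)<0$ and $f_1$ is strictly decreasing. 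The range claim then follows from the limits: as $r\to 0^+$ we have $\arsh r/r\to 1$ by the standard Taylor expansion $\arsh r=r-r^3/6+\cdots$, and as $r\to\infty$ we have $\arsh r/r\sim (\log r)/r\to 0$; combined with strict monotonicity and continuity this gives range exactly $(0,1)$.

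For part (2), I would again differentiate $f_2(r)=\bigl(r(1+r^2)-\sqrt{1+r^2}\,\arsh r\bigr)/r^3$ and aim to show $f_2'(r)>0$. The cleanest route is likely to write $f_2(r)=N(r)/r^3$ with $N(r)=r(1+r^2)-\sqrt{1+r^2}\,\arsh r$, compute $N'(r)$, and then show that the numerator of $f_2'$, namely $r^3 N'(r)-3r^2 N(r)=r^2\bigl(rN'(r)-3N(r)\bigr)$, is positive; this reduces the whole question to the sign of $rN'(r)-3N(r)$ on $(0,\infty)$. After simplifying $N'(r)=3r^2+1-\frac{r\,\arsh r}{\sqrt{1+r^2}}-1=3r^2-\frac{r\,\arsh r}{\sqrt{1+r^2}}$, one finds $rN'(r)-3N(r)$ collapses to an expression of the form (constant)$\cdot\sqrt{1+r^2}\,\arsh r$ minus a polynomial-in-$r$ multiple, at which point I expect the sign to be decided by a single auxiliary function $\psi(r)$ with $\psi(0^+)=0$ whose derivative has a fixed sign. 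The two boundary values of the range, $f_2(0^+)=2/3$ and $f_2(\infty^-)=1$, follow from the expansions $\arsh r=r-r^3/6+\tfrac{3}{40}r^5-\cdots$ and $\sqrt{1+r^2}=1+\tfrac12 r^2-\cdots$ (giving the leading $2/3$ after cancellation of the $r$ and $r^3$ terms) and from $\sqrt{1+r^2}\,\arsh r=o(r^3)$ relative to $r^3$ as $r\to\infty$, respectively.

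The main obstacle I anticipate is entirely in part (2): after the reduction $rN'(r)-3N(r)$, the resulting expression mixes a genuinely transcendental term $\sqrt{1+r^2}\,\arsh r$ with algebraic terms, and its sign is not visible by inspection. I expect to need one further differentiation, showing that the reduced auxiliary function vanishes at $0$ and has a monotone derivative (or applying Lemma \ref{lhr}, the monotone l'H\^opital rule, with a judicious choice of $f$ and $g$ so that $f'/g'$ is manifestly monotone). The delicate point is choosing the right pairing so that the $\sqrt{1+r^2}$ factors cancel cleanly under differentiation rather than proliferating; getting that bookkeeping right, rather than any deep idea, is where the real work lies. The $f_2(0^+)=2/3$ limit is also slightly subtle because it requires cancellation up to third order in the Taylor expansion, so I would verify it carefully rather than reading off only the leading term.
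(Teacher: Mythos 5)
Your part (1) is correct and complete, but by a different route from the paper: you differentiate $f_1$ directly and control the numerator through the auxiliary function $h(r)=r/\sqrt{1+r^2}-\arsh r$, while the paper applies the monotone l'H\^opital rule (Lemma \ref{lhr}) to the pair $f_{11}(r)=\arsh r$, $f_{12}(r)=r$, for which $f_{11}'(r)/f_{12}'(r)=1/\sqrt{1+r^2}$ is visibly decreasing. Both arguments are sound; the paper's version also hands over the two limiting values with no extra work, since they coincide with the limits of the derivative ratio.

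Part (2), as submitted, has a gap: your reduction to the sign of $rN'(r)-3N(r)$ is correct, but the positivity of that expression \emph{is} the content of the claim, and you leave it as an expectation (``I expect the sign to be decided\dots''). Concretely,
\[
rN'(r)-3N(r)=\frac{(3+2r^2)\,\arsh r}{\sqrt{1+r^2}}-3r=:\psi(r),
\]
(not quite the form you predicted), and the sign of $\psi$ is genuinely not visible by inspection: $\psi(r)=\tfrac{4}{15}r^5+O(r^7)$, so it vanishes to fifth order at the origin. Your plan does close: $\psi(0^+)=0$ and
\[
\psi'(r)=\frac{r}{1+r^2}\left[\frac{(1+2r^2)\,\arsh r}{\sqrt{1+r^2}}-r\right]>0,
\]
where positivity follows from $\arsh r>r/\sqrt{1+r^2}$ --- exactly the inequality your own part (1) establishes --- together with $1+2r^2>1+r^2$. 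But note that you had already computed $N'(r)=3r^2-\frac{r\,\arsh r}{\sqrt{1+r^2}}$; dividing by $(r^3)'=3r^2$ gives
\[
\frac{N'(r)}{3r^2}=1-\frac{1}{3}\,\frac{\arsh r}{r\sqrt{1+r^2}},
\]
which is strictly increasing as an immediate consequence of part (1), so Lemma \ref{lhr} (with $N(0^+)=0=r^3|_{r=0}$) finishes the monotonicity and yields both limits $2/3$ and $1$ at once. That is precisely the paper's proof, and it avoids the quotient rule, the auxiliary function $\psi$, and the second differentiation entirely; you were one observation away from it.
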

\begin{proof}
(1) Let $f_{11}(r)=\arsh\,r$ and $f_{12}(r)=r$. It is easy to see that $f_{11}(0^+)=f_{12}(0^+)=0$, then
$$
\frac{f'_{11}(r)}{f'_{12}(r)}=\frac{1}{\sqrt{1+r^2}}
$$
which is strictly decreasing. Hence by Lemma \ref{lhr}, $f_1$ is strictly decreasing with $f_1(0^+)=1$ and $f_1(+\infty)=\lim\limits_{r\rightarrow +\infty}f_1(r)=\lim\limits_{r\rightarrow +\infty}\frac{f'_{11}(r)}{f'_{12}(r)}=0$.

\medskip
(2) Let $f_{21}(r)=r(1+r^2)-\sqrt{1+r^2}\,\arsh\,r$ and $f_{22}(r)=r^3$. Then $f_{21}(0^+)=f_{22}(0^+)=0$.
By differentiation, we have
$$
\frac{f'_{21}(r)}{f'_{22}(r)}=1-\frac 13 \frac{1}{\sqrt{1+r^2}}\frac{\arsh\,r}{r},
$$
which is strictly increasing by (1). Hence by Lemma \ref{lhr}, $f_2$ is strictly increasing with $f_2(0^+)=2/3$ and $f_2(+\infty)=\lim\limits_{r\rightarrow +\infty}f(r)=1$.
\end{proof}

\begin{lemma}\label{t1l2}
For $p \in \mathbb{R}$ and $r\in(0,+\infty)$ define
$$h_p(r)\equiv 1+p \sqrt{1+r^2}\frac{\arsh\,r}{r}-\frac{1}{\sqrt{1+r^2}}\frac{\arsh\,r}{r}.$$

(1) If $p\le-2$, then $h_p$ is strictly decreasing with range $(-\infty,p)$.

(2) If $p>0$, then $h_p$ is strictly increasing with range $(p,+\infty)$.

(3) If $p=0$, then $h_p$ is strictly increasing with range $(0,1)$.

(4) If $-2<p<0$, then the range of $h_p$ is $(-\infty, C(p)]$,
where $C(p)\equiv\sup\limits_{0<r<+\infty}{h_p(r)}\in (p\,,1)$. Moreover, $\lim\limits_{p\to-2}C(p)=-2$ and $\lim\limits_{p\to 0}C(p)=1$.
\end{lemma}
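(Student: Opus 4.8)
The plan is to realize $h_p$ as a one–parameter family over a fixed geometry and to reduce every assertion to the sign of $h_p'$, which I will factor through the function of Lemma~\ref{t1l1}(2). Put $w=\sqrt{1+r^2}$ and
\[
g(r)=w\,\frac{\arsh r}{r},\qquad k(r)=\frac{1}{w}\,\frac{\arsh r}{r},
\]
so that $h_p(r)=1+p\,g(r)-k(r)$. The boundary values are immediate: since $\arsh r/r\to1$ and $w\to1$ as $r\to0^+$ we get $h_p(0^+)=p$, while $g\to+\infty$ and $k\to0$ as $r\to+\infty$ give $h_p(+\infty)=+\infty,\,1,\,-\infty$ according as $p>0,\ p=0,\ p<0$; these already match the claimed ranges, so the whole lemma reduces to the monotonicity of $h_p$. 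Writing $\varphi$ for the function $f_2$ of Lemma~\ref{t1l1}(2) (strictly increasing with $2/3<\varphi<1$), a direct computation gives $g'(r)=r\varphi(r)/w^{2}$ and an analogous expression for $k'$, and after collecting terms it collapses to
\[
h_p'(r)=\frac{r}{(1+r^2)^2}\,\Psi_p(r),\qquad \Psi_p(r)=\varphi(r)\big((p-1)+(p-2)r^2\big)+2(1+r^2).
\]
Hence $\operatorname{sgn}h_p'(r)=\operatorname{sgn}\Psi_p(r)$, and the task becomes controlling the sign of $\Psi_p$.

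Parts (1)–(3) follow by crude bounds on $\varphi$. For $p\le-2$ the bracket $(p-1)+(p-2)r^2$ is negative, so $\varphi>2/3$ yields $\Psi_p(r)<\tfrac23\big((p-1)+(p-2)r^2\big)+2(1+r^2)=\tfrac{2(p+2)}{3}+\tfrac{2(p+1)}{3}r^2\le0$ for $r>0$; thus $h_p$ is strictly decreasing with range $(-\infty,p)$. For $p>0$ one checks $\Psi_p>0$: where the bracket is $\ge0$ this is clear, and where it is $<0$ the estimate $\varphi<1$ gives $\Psi_p(r)>(p-1)+(p-2)r^2+2(1+r^2)=(p+1)+pr^2>0$. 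The case $p=0$ is the same estimate, giving $\Psi_0(r)>1$. These settle the strict monotonicity and the stated ranges in (1), (2), (3).

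The crux is (4). For $-2<p<0$ the bracket is strictly negative for all $r\ge0$, so $\Psi_p(r)=0$ is equivalent to $\varphi(r)=R_p(r)$ with $R_p(r)=\dfrac{2(1+r^2)}{(1-p)+(2-p)r^2}$. The key observation is that $R_p$ is \emph{strictly decreasing}: as a function of $u=r^2$ the numerator of $R_p'$ is the constant $-1$. Since $\varphi$ is strictly increasing, the graphs of $\varphi$ and $R_p$ meet in at most one point, and because $\Psi_p(0^+)=\tfrac23(p+2)>0$ while $\Psi_p(+\infty)=-\infty$ they meet in exactly one point $r_0$; as $\varphi-R_p$ is strictly increasing, $\Psi_p>0$ on $(0,r_0)$ and $\Psi_p<0$ on $(r_0,+\infty)$. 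Thus $h_p$ increases on $(0,r_0]$ and decreases on $[r_0,+\infty)$, with unique maximum $C(p)=h_p(r_0)$ and range $(-\infty,C(p)]$; here $C(p)>h_p(0^+)=p$ by the initial increase, and $C(p)=h_0(r_0)+p\,g(r_0)<1$ since $h_0<1$ and $p\,g<0$. For the limits: as $p\to0^-$, fixing any $r_*$ gives $1>C(p)\ge h_p(r_*)=h_0(r_*)+p\,g(r_*)\to h_0(r_*)$, and letting $r_*\to+\infty$ (so $h_0(r_*)\to1$) forces $C(p)\to1$; as $p\to-2^+$, for each fixed $r_*$ we have $\Psi_{-2}(r_*)<0$ by part (1), hence $\Psi_p(r_*)<0$ for $p$ near $-2$ by continuity in $p$, which places $r_0(p)<r_*$, so $r_0(p)\to0$, and joint continuity of $(p,r)\mapsto h_p(r)$ at $(-2,0^+)$, where the value is $0+(-2)\cdot1=-2$, gives $C(p)\to-2$.

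I expect the main obstacle to be the uniqueness of the critical point in part (4). The device that removes it is recasting $\Psi_p=0$ as the crossing equation $\varphi=R_p$ between a strictly increasing function (handed to us by Lemma~\ref{t1l1}(2)) and the explicitly strictly decreasing rational function $R_p$, which forces a single intersection with no second-order estimate. The only other delicate point is the preliminary algebra that puts $h_p'$ into the clean form $\tfrac{r}{(1+r^2)^2}\,\Psi_p$, since it is exactly this form, together with the range $2/3<\varphi<1$, that makes the sign analysis in (1)–(3) immediate.
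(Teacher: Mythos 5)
Your proposal is correct, and your derivative formula $h_p'(r)=\frac{r}{(1+r^2)^2}\Psi_p(r)$ with $\Psi_p(r)=f_2(r)\bigl((p-1)+(p-2)r^2\bigr)+2(1+r^2)$ checks out; but your sign analysis runs along a genuinely different track from the paper's. The paper factors the same derivative as
$$h_p'(r)=\frac1r\left(1-\frac{1}{\sqrt{1+r^2}}\frac{\arsh\, r}{r}\right)\bigl[p-f(r)\bigr],\qquad f(r)=2-\frac{1}{1+r^2}-\frac{2}{f_2(r)},$$
and observes that $f$ is strictly increasing from $(0,+\infty)$ onto $(-2,0)$. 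With $p$ isolated in this way, all four cases are read off at once: $p\le-2$ gives $p-f<0$, $p\ge 0$ gives $p-f>0$, and for $-2<p<0$ the injectivity of $f$ hands you existence and uniqueness of the critical point $r_0$ (where $f(r_0)=p$) with no extra work; moreover the thresholds $-2$ and $0$ are explained as the endpoints of the range of $f$. You instead keep $\Psi_p$ unfactored, prove (1)--(3) from the two-sided bound $2/3<f_2<1$, and for (4) need the separate device of recasting $\Psi_p=0$ as the crossing equation $f_2=R_p$ between an increasing and a decreasing function. That crossing argument is equivalent to the paper's equation $f(r)=p$ --- indeed $\Psi_p(r)=(1+r^2)f_2(r)\bigl(p-f(r)\bigr)$ --- so you have rediscovered the same fact in a somewhat more laborious form. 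On the other hand, your proposal is more complete than the paper in one respect: you actually prove the limits $\lim_{p\to-2}C(p)=-2$ (via $r_0(p)\to0$ and joint continuity of $(p,r)\mapsto h_p(r)$ up to $r=0$) and $\lim_{p\to0}C(p)=1$ (via $C(p)\ge h_p(r_*)\to h_0(r_*)$ and $r_*\to+\infty$), whereas the paper merely asserts them. One trivial slip: the numerator of $R_p'$, as a function of $u=r^2$, is the constant $-2$, not $-1$; since only its sign is used, nothing is affected.
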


\begin{proof}
By l'H\^opital's Rule, we get
$$
\lim\limits_{r\to+\infty}\frac{\sqrt{1+r^2}\arsh\,r}{r}=\lim\limits_{r\to+\infty}\left(1+\frac{r\,\arsh\,r}{\sqrt{1+r^2}}\right)=+\infty.
$$
Together with Lemma \ref{t1l1} (1), we have $h_p(0^+)=p$ and
\beq
h_p(+\infty)=\lim\limits_{r\rightarrow +\infty}h_p(r)=
\left\{\begin{array}{ll}
-\infty &\,\,\,p<0,\\
1 &\,\,\,p=0,\\
+\infty &\,\,\,p>0.
\end{array}\right.
\eeq

Next by differentiation, we have
$$
h'_p(r)=\frac 1r\left(1-\frac{1}{\sqrt{1+r^2}}\frac{\arsh\, r}{r}\right)[p-f(r)],
$$
where $f(r)=2-\frac{1}{1+r^2}-\frac{2}{f_2(r)}$ and $f_2(r)$ is as in Lemma \ref{t1l1} (2). Therefore, $f$ is strictly increasing from $(0,+\infty)$ onto $(-2,0)$. Hence we get (1)-(3).

(4) If $-2<p<0$, since the range of $f$ is $(-2,0)$, we see that
there exists exactly one point $r_0\in(0,+\infty)$ such that $p=f(r_0)$. Then $h_p$ is increasing on $(0, r_0)$ and decreasing on $(r_0,+\infty)$. Since
$$
h_p(r)=1-\left(-p \sqrt{1+r^2}\frac{\arsh\,r}{r}+\frac{1}{\sqrt{1+r^2}}\frac{\arsh\,r}{r}\right)<1,
$$
by the continuity of $h_p$, there is a continuous function
$$C(p)\equiv\sup\limits_{0<r<+\infty}{h_p(r)}$$
with $p<C(p)<1$. Moreover, $\lim\limits_{p\to-2}C(p)=-2$ and $\lim\limits_{p\to 0}C(p)=1$.
\end{proof}

\begin{lemma}\label{t1l3}
Let $p,q$ be real numbers and $r\in(0,+\infty)$. Let
$$g_{p,q}(r)\equiv\frac{\arsh^{q-1}\,r}{r^{p-1}\sqrt{1+r^2}}.$$

(1) If $p\le-2$, then $g_{p,q}$ is strictly increasing for each $q\ge p$, and $g_{p,q}$ is not monotone for any $q<p$.

(2) If $p>0$, then $g_{p,q}$ is strictly decreasing for each $q\le p$, and $g_{p,q}$ is not monotone for any $q>p$.

(3) If $p=0$, then $g_{p,q}$ is strictly increasing for each $q\ge 1$, $g_{p,q}$ is strictly decreasing for each $q\le 0$,
and $g_{p,q}$ is not monotone for any $0<q<1$.

(4) If $-2<p<0$, then $g_{p,q}$ is strictly increasing for each $q\ge C(p)$, and $g_{p,q}$ is not monotone for any $q<C(p)$.

Here $C(p)$ is the same as in Lemma \ref{t1l2}.
\end{lemma}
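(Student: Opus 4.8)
The plan is to reduce every assertion to the sign of $g_{p,q}'$, which I compute by logarithmic differentiation and then link directly to the function $h_p$ of Lemma \ref{t1l2}. Since $g_{p,q}(r)>0$ on $(0,+\infty)$, monotonicity is governed entirely by the sign of $g_{p,q}'(r)$, and the whole point will be to show that this sign is exactly the sign of $q-h_p(r)$. Writing
$$\log g_{p,q}(r)=(q-1)\log(\arsh r)-(p-1)\log r-\tfrac12\log(1+r^2),$$
differentiation gives
$$\frac{g_{p,q}'(r)}{g_{p,q}(r)}=\frac{q-1}{\sqrt{1+r^2}\,\arsh r}-\frac{p-1}{r}-\frac{r}{1+r^2}.$$
Multiplying through by the positive quantity $\tfrac{\sqrt{1+r^2}\,\arsh r}{r}$ and using $\tfrac{r^2}{1+r^2}=1-\tfrac{1}{1+r^2}$, a short rearrangement collapses the right-hand side, via the very definition of $h_p$, into
$$\frac{\sqrt{1+r^2}\,\arsh r}{r}\cdot\frac{g_{p,q}'(r)}{g_{p,q}(r)}=q-h_p(r).$$
Hence $g_{p,q}'(r)$ and $q-h_p(r)$ have the same sign for every $r\in(0,+\infty)$. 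This identity is the crux of the whole lemma, and once it is in place the four cases follow mechanically from the behaviour of $h_p$ established in Lemma \ref{t1l2}.

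With this reduction, each part is read off from the range and monotonicity of $h_p$. In case (1), where $p\le-2$, Lemma \ref{t1l2}(1) gives $h_p(r)<p$ for all $r$; thus $q\ge p$ forces $q-h_p(r)>0$ throughout, so $g_{p,q}$ is strictly increasing, while any $q<p$ lies in the range $(-\infty,p)$ of the strictly decreasing $h_p$, producing a single crossing and hence a sign change of $g_{p,q}'$, i.e.\ non-monotonicity. Cases (2) and (3) are entirely parallel, using Lemma \ref{t1l2}(2) and (3): for $p>0$ one has $h_p(r)>p$, so $q\le p$ gives strict decrease and $q>p$ gives one crossing; for $p=0$ one has $0<h_0(r)<1$, so $q\ge1$ gives strict increase, $q\le0$ gives strict decrease, and $0<q<1$ meets the range and yields a crossing.

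The delicate part is case (4), with $-2<p<0$, and I expect the crossing bookkeeping there to be the main obstacle. By Lemma \ref{t1l2}(4), $h_p$ rises from $p$ to its maximum $C(p)$ and then decreases to $-\infty$, so its range is $(-\infty,C(p)]$. For $q\ge C(p)$ we get $q-h_p(r)\ge0$ with equality at most at the unique maximiser of $h_p$ (and only when $q=C(p)$); since the set of zeros of $g_{p,q}'$ then has empty interior, $g_{p,q}$ is still \emph{strictly} increasing, which is the point that must be argued with care. For $q<C(p)$ the value $q$ is attained by $h_p$, and one must distinguish whether $q>p$, $q=p$, or $q<p$: in the first situation $h_p$ meets $q$ twice (once on each branch), in the latter two it meets $q$ only on the decreasing branch. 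In every such subcase $g_{p,q}'$ changes sign, so $g_{p,q}$ is not monotone. Thus the only genuine work beyond the sign identity is verifying that $q<C(p)$ always forces at least one sign change of $g_{p,q}'$, which the continuity and the explicit rise-then-fall shape of $h_p$ from Lemma \ref{t1l2} guarantee.
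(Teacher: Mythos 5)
Your proof takes exactly the paper's route --- logarithmic differentiation reduces everything to the sign of $q-h_p(r)$, and the four cases are then read off from Lemma \ref{t1l2} --- and your case analysis (which the paper leaves entirely implicit, including the strictness argument when $q=C(p)$ and the crossing counts for $q<C(p)$) is correct. One harmless slip: the correct identity is $\frac{g'_{p,q}(r)}{g_{p,q}(r)}=\frac{1}{\sqrt{1+r^2}\,\arsh r}\,\bigl[q-h_p(r)\bigr]$, so the positive multiplier collapsing the derivative to $q-h_p(r)$ is $\sqrt{1+r^2}\,\arsh r$ rather than $\frac{\sqrt{1+r^2}\,\arsh r}{r}$; your displayed identity is off by the positive factor $\frac{1}{r}$, which does not affect the sign argument or any conclusion.
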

\begin{proof} By logarithmic differentiation in $r$,
$$\frac{g'_{p,q}(r)}{g_{p,q}(r)}=\frac{1}{\sqrt{1+r^2}\arsh\, r}[q-h_p(r)],$$
where $h_p(r)$ is the same as in Lemma \ref{t1l2}. Hence the results follow from Lemma \ref{t1l2}.
\end{proof}

\medskip

The following theorem studies the $H_{p,q}$-convexity of $\arsh$.

\begin{theorem}\label{ath1}
The inverse hyperbolic sine function $\arsh$  is strictly $H_{p,q}$-convex on $(0,\infty)$ if and only if $(p,q)\in{D_1}\cup{D_2}$,
while $\arsh$  is strictly $H_{p,q}$-concave on $(0,\infty)$ if and only if $(p,q)\in{D_3}$,
where
$$D_1=\{(p,q)|-\infty<p<-2,\, p\le q<+\infty\},$$
$$D_2=\{(p,q)|-2\le p\le 0,\, C(p) \le q<+\infty\},$$
$$D_3=\{(p,q)|0\le p< +\infty,\,-\infty<q\le p\},$$
and $C(p)$ is the same as in Lemma \ref{t1l2}.
\end{theorem}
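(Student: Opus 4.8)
The plan is to convert the two-variable functional inequality defining $H_{p,q}$-convexity into an ordinary one-variable convexity statement, and then to recognise the resulting second-order condition as the monotonicity of $g_{p,q}$ already analysed in Lemma \ref{t1l3}. Concretely, for $p\neq 0$ and $q\neq 0$ I would introduce the change of variable $u=r^p$, $v=s^p$, so that $H_p(r,s)=\big(\tfrac{u+v}{2}\big)^{1/p}$, and set $\psi(u)=(\arsh(u^{1/p}))^q$. Because $H_q(\arsh r,\arsh s)^q=\tfrac12(\psi(u)+\psi(v))$ and $\arsh(H_p(r,s))^q=\psi\big(\tfrac{u+v}{2}\big)$, the defining inequality $\arsh(H_p(r,s))\le H_q(\arsh r,\arsh s)$ becomes, after raising both sides to the power $q$, exactly a statement of midpoint convexity of $\psi$ when $q>0$ and of midpoint concavity when $q<0$ (the power reverses the inequality). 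Since $\psi$ is smooth, midpoint convexity is equivalent to ordinary convexity, i.e. to monotonicity of $\psi'$, and strict convexity corresponds precisely to the strict inequality for $r\neq s$ in the definition. The degenerate geometric-mean cases $p=0$ and $q=0$ I would treat separately by the logarithmic substitutions $u=\log r$ and $\psi=\log\arsh(\,\cdot\,)$.

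The key computation is the derivative of $\psi$. Using $dr/du=\tfrac1p r^{1-p}$ and $(\arsh)'(r)=1/\sqrt{1+r^2}$, one finds
$$\psi'(u)=\frac{q}{p}\,\frac{\arsh^{q-1} r}{r^{p-1}\sqrt{1+r^2}}=\frac{q}{p}\,g_{p,q}(r),\qquad r=u^{1/p},$$
so that the convexity of $\psi$ in $u$ is governed entirely by the monotonicity of $g_{p,q}$ in $r$, modulated by the sign factor $q/p$ and by the orientation of $u\mapsto r=u^{1/p}$. Running through the four sign combinations of $p$ and $q$ (and the two degenerate cases), I expect the several reversals — one from the factor $q/p$, one from whether $r$ increases or decreases with $u$, and the one incurred when $q<0$ in passing from the inequality for $\arsh$ to that for $\psi$ — to cancel out in every sector, yielding the single uniform rule: $\arsh$ is strictly $H_{p,q}$-convex if and only if $g_{p,q}$ is strictly increasing, and strictly $H_{p,q}$-concave if and only if $g_{p,q}$ is strictly decreasing.

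With this equivalence established, the theorem follows by quoting Lemma \ref{t1l3}. That lemma shows $g_{p,q}$ is strictly increasing exactly on $D_1\cup D_2$ (case (1) for $p<-2$, cases (1)/(4) glued at $p=-2$ via $C(-2)=-2$, case (4) for $-2<p<0$, and case (3) for $p=0$ glued via $C(0)=1$), strictly decreasing exactly on $D_3$ (case (2) for $p>0$ and case (3) for $p=0$, $q\le0$), and non-monotone on the complement. The non-monotone region supplies the ``only if'' directions: there $\psi'$ is not monotone, so $\psi$ is neither convex nor concave, whence $\arsh$ is neither $H_{p,q}$-convex nor concave, ruling out every $(p,q)$ outside $D_1\cup D_2\cup D_3$.

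I expect the main obstacle to be the bookkeeping rather than any deep difficulty: one must verify that in each sign sector the composition of reversals really does collapse to the single rule \emph{convex} $\Leftrightarrow$ \emph{$g_{p,q}$ increasing}, and one must handle the boundaries $p=0$ and $q=0$ by hand, since the power substitution degenerates into the logarithmic one there. A secondary point requiring care is strictness along the boundary curves of the regions (for instance $q=p$ on the edges of $D_1$ and $D_3$, and $q=C(p)$ on the edge of $D_2$): I would check that strict monotonicity of $g_{p,q}$ yields strict convexity of $\psi$, and conversely, so that the characterisation is a genuine biconditional, including those boundary curves, exactly as stated.
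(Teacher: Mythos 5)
Your proposal is correct, and it reaches the theorem by a genuinely different route from the paper's, although both rest on the same key lemma. The paper never reduces to a one-variable convexity statement: it fixes $y$, forms the two-variable difference $F(x,y)=\arsh^{q}\left(H_p(x,y)\right)-\tfrac12\left(\arsh^{q}x+\arsh^{q}y\right)$ (a ratio in the $q=0$ case), computes $\frac{\partial F}{\partial x}=\tfrac{q}{2}\,x^{p-1}\left(g_{p,q}(t)-g_{p,q}(x)\right)$ with $t=H_p(x,y)>x$, reads off the sign from Lemma \ref{t1l3}, and integrates back to the diagonal where $F(y,y)=0$. You instead conjugate by the power (or logarithmic) maps, setting $\psi=\phi_q\circ\arsh\circ\phi_p^{-1}$, identify $H_{p,q}$-convexity with ordinary midpoint convexity (or concavity, when $q<0$) of the smooth function $\psi$, and compute $\psi'=(q/p)\,g_{p,q}$ --- the same function $g_{p,q}$, so Lemma \ref{t1l3} again carries all the analytic content. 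Your sign bookkeeping does close: in each of the four sign sectors, and in the degenerate cases $p=0$ or $q=0$, the three possible reversals (the factor $q/p$, the orientation of $u\mapsto u^{1/p}$, and the power $q<0$) cancel to give the uniform rule that $\arsh$ is strictly $H_{p,q}$-convex iff $g_{p,q}$ is strictly increasing, strictly $H_{p,q}$-concave iff $g_{p,q}$ is strictly decreasing, and neither iff $g_{p,q}$ is non-monotone; Lemma \ref{t1l3} says these three alternatives occur exactly on $D_1\cup D_2$, on $D_3$, and on the complement, respectively, which is the asserted biconditional including the boundary curves $q=p$ and $q=C(p)$. Comparatively, your reduction makes the ``only if'' direction transparent (one non-monotone derivative rules out convexity and concavity simultaneously) and puts $p=q=0$ on the same footing as every other case, yielding \emph{strict} $H_{0,0}$-concavity directly from the strict decrease of $g_{0,0}$; the paper instead handles $p=q=0$ by letting $p\to0^{+}$ in the inequality for $p>0$, a limiting argument that by itself only gives the non-strict inequality, so your route actually strengthens that step. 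What the paper's route buys in exchange is that each case is a self-contained sign computation, requiring neither the equivalence of midpoint convexity with convexity nor any tracking of cancelling reversals.
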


\begin{proof}  The proof is divided into the following four cases.

{\bf Case 1.} $p\neq0$ and $q\neq0$.

We may suppose that $0<x\leq y<1$. Define
$$F(x,y)=\arsh^{q}\left(H_p(x,y)\right)-\frac{\arsh^{q}x+\arsh^{q}y}{2}.$$
Let $t=H_p(x,y)$, then $\frac{\partial t}{\partial x}=\frac 12(\frac xt)^{p-1}$. If $x<y$, we see that $t>x$.
By differentiation, we have
$$\frac{\partial F}{\partial x}=\frac{q}{2}x^{p-1}\left(\frac{\arsh^{q-1}t}{t^{p-1}\sqrt{1+t^2}}-\frac{\arsh^{q-1}x}{x^{p-1}\sqrt{1+x^2}}\right).$$

\medskip
{\it Case 1.1.} $p\le-2$, $q\ge{p}$ and $q\neq0$.

By Lemma \ref{t1l3}(1), $\frac{\partial{F}}{\partial{x}}<0$ if $0>q\ge{p}$, and $\frac{\partial{F}}{\partial{x}}>0$ if $q>0  (\ge p)$.
Then $F(x,y)$ is strictly decreasing and $F(x,y)\ge F(y,y)=0$ if $0>q\ge{p}$, and $F(x,y)$ is strictly increasing and $F(x,y)\leq F(y,y)=0$ if $q>0$.
Hence we have
$$\arth(H_p(x,y))\le H_q(\arth x,\arth y)$$
with equality if and only if $x=y$.

In conclusion, $\arsh$ is strictly $H_{p,q}$-convex on $(0,+\infty)$ for $(p,q)\in\{(p,q)|p\le-2,\, p\le q<0\}\cup\{(p,q)|p\le-2,\, q>0\}$.

\medskip
{\it Case 1.2.} $p\le-2$, $q<p$.

By Lemma \ref{t1l3}(1), with an argument similar to Case 1.1, it is easy to see that
$\arsh$ is neither $H_{p,q}$-concave nor $H_{p,q}$-convex on the whole interval $(0,+\infty)$.

\medskip
{\it Case 1.3.} $p>0$, $q\le{p}$ and $q\neq0$.

By Lemma \ref{t1l3}(2), $\frac{\partial{F}}{\partial{x}}>0$ if $q<0(\le{p})$,  and $\frac{\partial{F}}{\partial{x}}<0$ if $0<q\le p$.
Then $F(x,y)$ is strictly increasing and $F(x,y)\le F(y,y)=0$ if $q<0$, and $F(x,y)$ is strictly decreasing and $F(x,y)\ge F(y,y)=0$ if $0<q\le p$.
Hence we have
$$\arth(H_p(x,y))\ge H_q(\arth x,\arth y)$$
with equality if and only if $x=y$.

In conclusion, $\arsh$ is strictly $H_{p,q}$-concave on $(0,+\infty)$ for $(p,q)\in\{(p,q)|p>0,\, q<0\}\cup\{(p,q)|p>0,\, 0<q\le p\}$.

\medskip
{\it Case 1.4.} $p>0$, $q>p$.

By Lemma \ref{t1l3}(2), with an argument similar to Case 1.3, it is easy to see that
$\arsh$ is neither $H_{p,q}$-concave nor $H_{p,q}$-convex on the whole interval $(0,+\infty)$.

\medskip
{\it Case 1.5.} $-2<p<0$, $q\geq{C(p)}$ and $q\neq0$.

By Lemma \ref{t1l3}(4), $\frac{\partial{F}}{\partial{x}}<0$ if $0>q\geq{C(p)}$, and $\frac{\partial{F}}{\partial{x}}>0$ if $q\geq{C(p)}$ and $q>0$.
Then $F(x,y)$ is strictly decreasing and $F(x,y)\ge F(y,y)=0$ if $0>q\geq{C(p)}$, and $F(x,y)$ is strictly increasing and $F(x,y)\leq F(y,y)=0$ if $q\geq{C(p)}$ and $q>0$.
Hence we have
$$\arth(H_p(x,y))\le H_q(\arth x,\arth y)$$
with equality if and only if $x=y$.

In conclusion, $\arsh$ is strictly $H_{p,q}$-convex on $(0,+\infty)$ for $(p,q)\in\{(p,q)|-2<p<0,\, 0>q\ge C(p)\}\cup\{(p,q)|-2<p<0,\, q\geq{C(p)}, q>0\}$.

\medskip
{\it Case 1.6.} $-2<p<0$, $q<{C(p)}$ and $q\neq0$.

By Lemma \ref{t1l2}(4), with an argument similar to Case 1.5, it is easy to see that
$\arsh$ is neither $H_{p,q}$-concave nor $H_{p,q}$-convex on the whole interval $(0,+\infty)$.

\bigskip
{\bf Case 2.} $p\neq0$ and $q=0$.

For $0<x\leq y<1$, let
$$F(x,y)=\frac{\arsh^2(H_p(x,y))}{\arsh{x}\,\arsh{y}},$$
and $t=H_p(x,y)$. If $x<y$, we see that $t>x$.
By logarithmic differentiation, we obtain
$$\frac1{F}\frac{\partial F}{\partial x}=x^{p-1}\left(\frac{(\arsh{t})^{-1}}{t^{p-1}\sqrt{1+t^2}}-\frac{(\arsh{x})^{-1}}{x^{p-1}\sqrt{1+x^2}}\right).$$

\medskip
{\it Case 2.1.} $p\le -2$ and $q=0(>p)$.

By Lemma \ref{t1l3}(1), we have $\frac{\partial F}{\partial x}>0$ and $F(x,y)\le F(y,y)=1$. Hence we have
$$\arsh(H_p(x,y))\le\sqrt{\arsh{x}\,\arsh{y}}$$
with equality if and only if $x=y$.

In conclusion, $\arsh$ is strictly $H_{p,q}$-convex on $(0,+\infty)$ for $(p,q)\in\{(p,q)|-p\le -2, q=0\}$.

\medskip
{\it Case 2.2.} $p>0$ and $q=0(<p)$.

By Lemma \ref{t1l3}(2), we have $\frac{\partial F}{\partial x}<0$ and $F(x,y)\ge F(y,y)=1$. Hence we have
$$\arsh(H_p(x,y))\ge\sqrt{\arsh{x}\,\arsh{y}}$$
with equality if and only if $x=y$.

In conclusion, $\arsh$ is strictly $H_{p,q}$-concave on $(0,+\infty)$ for $(p,q)\in\{(p,q)|p>0, q=0\}$.

\medskip
{\it Case 2.3.} $-2<p<0$ and $q=0\ge C(p)$.

By Lemma \ref{t1l3}(4), we have $\frac{\partial F}{\partial x}>0$ and $F(x,y)\le F(y,y)=1$. Hence
we have
$$\arsh(H_p(x,y))\le\sqrt{\arsh{x}\,\arsh{y}}$$
with equality if and only if $x=y$.

In conclusion, $\arsh$ is strictly $H_{p,q}$-convex on $(0,+\infty)$ for $(p,q)\in\{(p,q)|-2<p<0, q=0\ge C(p)\}$.

\medskip
{\it Case 2.4.} $-2<p<0$ and $q=0<C(p)$.

By Lemma \ref{t1l3}(4), with an argument similar to Case 2.3, it is easy to see that
$\arsh$ is neither $H_{p,q}$-concave nor $H_{p,q}$-convex on the whole interval $(0,+\infty)$.

\bigskip
{\bf Case 3.} $p=0$ and $q\neq0$.

For $0<x\leq y<1$, let
$$F(x,y)=\arsh^q(\sqrt{xy})-\frac{\arsh^q\,x+\arsh^q\,y}{2},$$
and $t=\sqrt{xy}$. If $x<y$, we have that $t>x$. By differentiation, we obtain
$$\frac{\partial F}{\partial x}=\frac{q}{2x}\left(\frac{\arsh^{q-1}t}{t^{-1}\sqrt{1+t^2}}-\frac{\arsh^{q-1}x}{x^{-1}\sqrt{1+x^2}}\right).$$

\medskip
{\it Case 3.1.} $p=0$ and $q\ge 1$.

By Lemma \ref{t1l3}(3), we have $\frac{\partial F}{\partial x}>0$ and $F(x,y)\le F(y,y)=0$. Hence we have
$$\arsh(\sqrt{xy})\leq H_q(\arsh{x},\,\arsh{y})$$
with equality if and only if $x=y$.

In conclusion, $\arsh$ is strictly $H_{p,q}$-convex on $(0,1)$ for $(p,q)\in\{(p,q)|p=0, q\ge 1\}$.

\medskip
{\it Case 3.2.} $p=0$ and $q<0$.

By Lemma \ref{t1l3}(3), we have $\frac{\partial F}{\partial x}>0$ and $F(x,y)\le F(y,y)=0$. Hence we have
$$\arsh(\sqrt{xy})\ge H_q(\arsh{x},\,\arsh{y})$$
with equality if and only if $x=y$.

In conclusion, $\arsh$ is strictly $H_{p,q}$-concave on $(0,1)$ for $(p,q)\in\{(p,q)|p=0, q\le 0\}$.

\medskip
{\it Case 3.3.} $p=0$ and $0<q<1$.

By Lemma \ref{t1l3}(3), with an argument similar to Case 3.1 or Case 3.2, it is easy to see that
$\arsh$ is neither $H_{p,q}$-concave nor $H_{p,q}$-convex on the whole interval $(0,+\infty)$.

{\bf Case 4.} $p=q=0$.

By Case 2.2, for all $x\,,y\in(0,+\infty)$, we have
$$\arsh(H_p(x,y))\ge \sqrt{\arsh{x}\,\arsh{y}},\quad\mbox{for}\quad{p>0}.$$
By the continuity of $H_p$ in $p$ and $\arsh$ in $x$, we have
$$\arsh(H_0(x,y))\ge H_0(\arsh{x},\arsh{y}).$$
In conclusion, $\arsh$ is strictly $H_{0,0}$-concave on $(0,+\infty)$.

This completes the proof of Theorem \ref{ath1}.
\end{proof}

\medskip

Setting $p=1=q$ in Theorem \ref{ath1}, we obtain the concavity of $\arsh$ easily.

\begin{corollary}
The inverse hyperbolic sine function $\arsh$ is strictly concave on $(0,+\infty)$.
\end{corollary}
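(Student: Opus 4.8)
The plan is to obtain this corollary as the single special case $p=q=1$ of Theorem \ref{ath1}. First I would unwind what $H_{p,q}$-concavity means when $p=q=1$. By definition, strict $H_{1,1}$-concavity of $\arsh$ is the inequality
$$\arsh\left(H_1(x,y)\right) \ge H_1(\arsh x,\,\arsh y),$$
with equality only when $x=y$, and since $H_1$ is precisely the arithmetic mean $(x+y)/2$, this reads
$$\arsh\left(\frac{x+y}{2}\right) > \frac{\arsh x + \arsh y}{2} \quad (x\ne y),$$
i.e. strict midpoint concavity of $\arsh$ on $(0,+\infty)$.

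Next I would check that the pair $(p,q)=(1,1)$ lies in the concavity region $D_3=\{(p,q)\mid 0\le p<+\infty,\ -\infty<q\le p\}$ of Theorem \ref{ath1}. This is immediate: we have $0\le 1<+\infty$ and $q=1\le 1=p$, so $(1,1)\in D_3$. Applying Theorem \ref{ath1} then yields that $\arsh$ is strictly $H_{1,1}$-concave on $(0,+\infty)$, which by the previous paragraph is exactly strict midpoint concavity.

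Finally, since $\arsh$ is continuous on $(0,+\infty)$, strict midpoint concavity upgrades to ordinary strict concavity, namely
$$\arsh\bigl(\lambda x + (1-\lambda)y\bigr) \ge \lambda\,\arsh x + (1-\lambda)\,\arsh y \quad (\lambda\in[0,1]),$$
with strictness for $x\ne y$ and $0<\lambda<1$. There is essentially no obstacle in this argument; it is a direct specialization of the already-established $H_{p,q}$ classification. The only point deserving a word of justification is this last step, the passage from the two-point, equal-weight (midpoint) inequality delivered by the $H_{1,1}$ formulation to the full concavity inequality for arbitrary weights $\lambda$. This is the standard fact that a continuous midpoint-concave function is concave, so no additional estimate is needed.
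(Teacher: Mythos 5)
Your proposal is correct and follows exactly the paper's route: the paper's proof is precisely the remark that setting $p=q=1$ in Theorem \ref{ath1} (noting $(1,1)\in D_3$) gives strict $H_{1,1}$-concavity, i.e.\ strict midpoint concavity, which together with continuity yields strict concavity. You simply spell out the details (including the standard midpoint-to-full-concavity upgrade, where strictness follows since a concave function failing strict concavity is affine on a segment, contradicting the strict midpoint inequality), which the paper leaves implicit.
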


\section{Proof of Main Results }

\begin{proof}[Proof of Theorem \ref{Lamdd}]

By the proof of Proposition \ref{d3d4for}, we have
$$d_3d_4=\arsh(m\,r)\arsh(m\,r')$$
where $m$, $r$ are the same as in the proof of Proposition \ref{d3d4for}. By Lemma \ref{lecr}(2), we have
$$d_3d_4\le \left(\arsh \left(\frac{\sqrt 2}{2}m\right)\right)^2.$$

This completes the proof of Theorem \ref{Lamdd}.
\end{proof}

\begin{proof}[Proof of Theorem \ref{Lamdad}]
By the proof of Proposition \ref{d3d4for}, we have
$$d_3+d_4=\arsh(m\,r)+\arsh(m\,r')$$
where $m$, $r$ are the same as in the proof of Proposition \ref{d3d4for}. By Lemma \ref{lecr}(3), the desired conclusion follows.
\end{proof}

\medskip

\subsection*{Acknowledgments}I wish to express my sincere gratitude to Professor Matti Vuorinen whose suggestions and ideas were invaluable during my work. This research was partly supported by Turku University Foundation, National Natural Science Foundation of China (NNSF of China, No.11601485) and Science Foundation of Zhejiang Sci-Tech University (ZSTU).


\begin{thebibliography}{AVV22}
\footnotesize


\bibitem[AVV1]{avv1}
{\sc G. D. Anderson, M. K. Vamanamurthy, and M. Vuorinen},
{\em Conformal Invariants, Inequalities and Quasiconformal Maps},
Canadian Mathematical Society Series of Monographs and Advanced Texts,
a Wiley-Interscience Publication, John Wiley \& Sons, Inc., New York, 1997.

\bibitem[AVV2]{avv2}
{\sc G. D. Anderson, M. K. Vamanamurthy, and M. Vuorinen},
{\it Generalized convexity and inequalities},
J. Math. Anal. Appl. {\bf 335} (2007), 1294--1308.

\bibitem[AVZ]{avz}
{\sc G. D. Anderson, M. Vuorinen, and X.-H. Zhang},
{\it Topics in special functions III},
Analytic Number Theory, Approximation Theory and Special Functions, ed. by K. Alladi, G. Milovanovic, and M. Th. Rassias, Springer-Verlag,
arXiv math 1209.1696 [math.CA].

\bibitem[A]{a}
{\sc J. W. Anderson},
{\it Hyperbolic Geometry},
2nd edition, Springer Undergraduate Mathematics Series, Springer-Verlag London, Ltd., London, 2005.


\bibitem[Ba1]{ba1}
{\sc \'A. Baricz},
{\em Generalized Bessel functions of the first kind},
Lecture Notes in Mathematics {\bf 1994}, Springer-Verlag, Berlin Heidelberg, 2010.

\bibitem[Ba2]{ba}
{\sc \'A. Baricz},
{\it Convexity of the zero-balanced Gaussian hypergeometric functions with respect to H\"older means},
JIPAM. J. Inequal. Pure Appl. Math. {\bf 8} (2007), Article 40, 9 pp.

\bibitem[Be]{be}
{\sc A. F. Beardon},
{\it The Geometry of Discrete Groups},
Graduate Texts in Mathematics {\bf91}, Springer-Verlag, New York, 1983.

\bibitem[BM]{bm}
{\sc A. F. Beardon and D. Minda},
{\it Conformal images and the angle of parallelism},
Comput. Methods Funct. Theory {\bf12} (2012), 87--100.


\bibitem[Bu]{bu}
{\sc P. S. Bullen},
{\it Handbook of Means and Their Inequalities},
Mathematics and its Applications {\bf560}, Kluwer Academic Publishers Group, Dordrecht, 2003.

\bibitem[CWZQ]{cwzq}
\textsc{Y.-M. Chu, G.-D. Wang, X.-H. Zhang, and S.-L. Qiu},
{\it Generalized convexity and inequalities involving special functions},
 J. Math. Anal. Appl. {\bf 336} (2007), 768--776.


\bibitem[HLP]{hlp}
{\sc G. H. Hardy, J. E. Littlewood, and G. P\'olya},
{\it Inequalities},
2nd ed. Cambridge University Press, Cambridge, 1952.

\bibitem[KL]{kl}
{\sc L. Keen and N. Lakic},
{\it Hyperbolic Geometry from a Local Viewpoint},
London Mathematical Society Student Texts {\bf 68}, Cambridge University Press, Cambridge, 2007.

\bibitem[KV]{kv}
{\sc R. Kl\'en and M. Vuorinen},
{\it Apollonian circles and hyperbolic geometry},
J. Analysis {\bf 19} (2011), 41--60,
arXiv:1010.2630[math. MG].

\bibitem[K]{k}
\textsc{J. G. Krzy\.z},
{\em Problems in Complex Variable Theory},
Translation of the 1962 Polish original, Modern Analytic and Computational Methods in Science and Mathematics, No. 36, American Elsevier Publishing Co., Inc., New York, PWN---Polish Scientific Publishers, Warsaw, 1971.


\bibitem[V]{v}
{\sc M. Vuorinen},
{\it Conformal Geometry and Quasiregular Mappings},
Lecture Notes in Mathematics {\bf1319}, Springer-Verlag, Berlin, 1988.

\bibitem[VW]{vw}
{\sc  M. Vuorinen and G.-D. Wang},
{\it Hyperbolic Lambert quadrilaterals and quasiconformal mappings},
Ann. Acad. Sci. Fenn. Math. {\bf 38}(2013), 433--453, arXiv:1203. 6494 [math. MG].

\bibitem[ZWC]{zwc}
\textsc{X.-H. Zhang, G.-D. Wang and Y.-M. Chu},
{\em Convexity with respect to H\"older mean involving zero-balanced hypergeometric functions},
 J. Math. Anal. Appl.  {\bf 353}(2009), 256--259.

\end{thebibliography}
\end{document}